\newtheorem{theorem}{Theorem}[section]
\newtheorem{proposition}[theorem]{Proposition}
\newtheorem{conjecture}[theorem]{Conjecture}
\newtheorem{lemma}[theorem]{Lemma}
\newtheorem{corollary}[theorem]{Corollary}
\theoremstyle{remark}
\newtheorem{example}[theorem]{\bf Example}
\theoremstyle{definition}
\def\P{{\bf P}}
\def\A{{\bf A}}
\def\O{{\mathcal O}}
\def\cO{{\mathcal O}}
\def\cI{{\mathcal I}}
\def\Hilb{{\rm Hilb}}
\def\G{{\rm G}}
\def\deg{\mathop{\rm deg}}
\def\spec{\mathop{\rm Spec}}
\def\codim{\mathop{\rm codim}}
\def\Hom{\mathop{\rm Hom}}
\def\Hom{{\rm Hom}}
\def\Spec{\mathop{\rm Spec}}
\def\PGL{\mathop{\rm PGL}}
\def\coker{\mathop{\rm coker}}
\def\PP{{\P}}
\def\modular{semi-modular }
\def\T{{\mathcal T}}
\begin{document}

\title{Fibers of Projections and submodules of deformations}
\author{Roya Beheshti, David Eisenbud}
\date{}
\maketitle

\begin{abstract}
We  bound the complexity of the fibers of the generic linear projection
of a smooth variety in terms of a new family of invariants. These
invariants are closely related to ideas of John Mather, and we give a simple
proof of his bound on the Thom-Boardman invariants of a generic projection
as an application.
\end{abstract}

\section{Introduction}

Let $X \subset \P^r$ be a smooth projective variety of dimension $n$ over an
algebraically closed field $k$ of characteristic zero, 
and let $\pi: X \to 
\P^{n+c}$ be a general linear projection. In this note we introduce some new ways
of bounding the complexity of the fibers of $\pi$. Our ideas are closely related to 
the groundbreaking work of John Mather, and we explain a simple proof of his result
\cite{mather2} bounding the Thom-Boardman invariants of $\pi$ as a special case.

This subject was studied classically for small $n$. In our situation the map $\pi$ will
be finite and generically one-to-one, so we are asking for bounds on the complexity
of finite schemes, and the degree of the scheme is the obvious invariant.
Consider, for simplicity,
the case $c=1$. It is well-known
that the maximal degree of the fiber of a general projection of a curve
 to the plane is 2, and that the maximal degree of a fiber of a general projection
 of a smooth surface to three-space is 3. These results were extended to higher
 dimension and more general ground fields at the expense of strong hypotheses on the structure of the fibers by
 Kleiman, Roberts, Ran and others.
 
 In characteristic zero, the most striking results are those of John Mather. In the case $c=1$ and 
 $n\leq 14$ he proved that a general projection $\pi$ would be a stable map, and as a consequence
 he was able to show that, in this case, the fibers of $\pi$ have degree $\leq n+1$.
 More generally, in case
$n \leq 6c+7$, or $n \leq 6c+8$ and $c\leq 3$, he showed that the degree of any fiber of $\pi$ 
is  bounded by $n/c+1$. He also proved that for any $n$ and $c$, the number of distinct
points in any fiber is bounded by $n/c+1$; this is a special case of his result bounding the 
Thom-Boardman invariants.

An optimist (such as the second author), seeing these facts, 
might hope that the degree of the fibers of $\pi$ would be bounded by $n/c+1$ for any $n$ and $c$.
However, Lazarsfeld \cite{laz-book} (Volume 2, Prop. 7.2.17) showed that the singularities of $\pi(X)$ could have very high
multiplicity when $n$ is large. His ideas can also be used to prove that 
for large $n$
and a sufficiently positive embedding of any smooth variety $X$ in a projective space, 
a general linear projection
of $X$ to $\PP^{n+c}$ will have  
fibers of  degree exponentially greater than $n/c$. The first case with $c=1$ in which 
his argument gives something interesting is $n=56$, where it shows that (if the embedding
is sufficiently positive) then there will be fibers of degree $\geq 70$.
For a proof see \cite[Proposition 2.2]{projection}.

Although we know no upper bound on the degrees of the fibers of $\pi$  that depends only  on $n$ and $c$,
 we showed
in our paper \cite{projection} that there is a natural invariant of the fiber that agrees ``often'' with the degree
and that is always bounded by $n/c+1$.

In this note,  we 
generalize the construction there and give a general mechanism for producing 
such invariants.  Our approach is closely related to that of John Mather.

Here is a sample of the results we prove. We first ask how ``bad'' a finite scheme $Y$ can be
and still appear inside the fiber of the generic projection of a smooth n-fold to 
$\PP^{n+c}$? Our result is written in terms of the degree of $Y$ and the degree of
the \emph{tangent sheaf} to $Y$, defined as 
$$
\T_Y = Hom_{\cO_Y}(\Omega_{Y/K}, \cO_Y).
$$
We prove the following in Theorem \ref{one subscheme}.
\begin{theorem}\label{one scheme}
Let $X \subset \P^r$ be a smooth projective variety of dimension $n$, and let $Y$ be a scheme of dimension zero. 
If for a general linear projection $\pi_{\Sigma}: X \to \P^{n+c}$, there is a fiber of  $\pi_{\Sigma}$  
that contains $Y$ as a closed 
subscheme, then 
$$ 
  \deg Y +  \frac{1}{c} \deg \T_Y \leq \frac{n}{c}+1 .
$$   
\end{theorem}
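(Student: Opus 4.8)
The plan is to recast the hypothesis as a dimension count on an incidence variety of pairs (center of projection, copy of $Y$ inside $X$), and to read the inequality off the count; the invariant $\deg\T_Y$ enters through the dimension of the family of copies of $Y$ in $X$.

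A general linear projection to $\P^{n+c}$ is the projection $\pi_\Sigma$ from a general center $\Sigma\cong\P^{r-n-c-1}$ with $\Sigma\cap X=\emptyset$, and the fibers of $\pi_\Sigma$ on $X$ are the schemes $X\cap\Lambda$ with $\Lambda$ an $(r-n-c)$-dimensional linear space containing $\Sigma$. First I would let $H\subseteq\Hilb(X)$ be the locus of closed subschemes $Z\subseteq X$ that are abstractly isomorphic to $Y$, and bound $\dim H$. The Zariski tangent space to $\Hilb(X)$ at $[Z]$ is $N_{Z/X}$; the kernel of the forgetful map $N_{Z/X}\to T^1_Z$ to first-order deformations of the abstract scheme $Z$ is the image of the restriction map $\T_X|_Z\to N_{Z/X}$, and a deformation of $Z$ inside $X$ that keeps it isomorphic to $Y$ maps to zero in $T^1_Z$, so $T_{[Z]}H\subseteq\mathrm{im}(\T_X|_Z\to N_{Z/X})$. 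Since the kernel of $\T_X|_Z\to N_{Z/X}$ consists of the ambient vector fields tangent to $Z$, i.e.\ is $\T_Z$, the length of this image is $\deg\T_X|_Z-\deg\T_Z=n\deg Y-\deg\T_Y$ (using that $\T_X$ is locally free of rank $n$, so $\deg\T_X|_Z=n\deg Z=n\deg Y$, and that $\T_Z\cong\T_Y$). Hence $\dim H\le n\deg Y-\deg\T_Y$.

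Next I would form the incidence variety $\mathcal B=\{(\Sigma,Z):Z\in H,\ Z\subseteq\Lambda\ \text{for some}\ (r-n-c)\text{-plane}\ \Lambda\supseteq\Sigma\}$ inside $\mathcal P\times H$, where $\mathcal P$ is the dense open set of centers disjoint from $X$; the hypothesis says exactly that the first projection $\mathcal B\to\mathcal P$ is dominant, so $\dim\mathcal B\ge\dim\mathcal P$. For fixed $Z\in H$ with $\langle Z\rangle$ of dimension $d$, the scheme $Z$ lies in such a $\Lambda\supseteq\Sigma$ exactly when $\Sigma$ meets $\langle Z\rangle$ in a hyperplane of $\langle Z\rangle$; the set of such $\Sigma$ that also avoid $X$ has dimension $\dim\mathcal P-d(n+c)$ when some hyperplane of $\langle Z\rangle$ misses $X$, and is empty otherwise, i.e.\ when $X\cap\langle Z\rangle$ is positive-dimensional. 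Writing $h_d$ for the dimension of $\{Z\in H:\dim\langle Z\rangle=d,\ \dim(X\cap\langle Z\rangle)=0\}$, this gives $\dim\mathcal B=\dim\mathcal P+\max_d(h_d-d(n+c))$. When $Z$ spans a $\P^{\deg Y-1}$ — the expected behaviour — we have $d=\deg Y-1$ and $h_d\le\dim H$, so this stratum contributes at most $n\deg Y-\deg\T_Y-(\deg Y-1)(n+c)=(n+c)-c\deg Y-\deg\T_Y$, which is exactly the bound we want.

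The remaining, and I expect the main, obstacle is to show that the strata with $d<\deg Y-1$ contribute no more, equivalently that $h_d\le(d+1)(n+c)-c\deg Y-\deg\T_Y$ throughout that range. The new input there is that $X\cap\langle Z\rangle$ is forced to be $0$-dimensional of length $\ge\deg Y>d+1$, so $\langle Z\rangle$ is a $d$-plane abnormally tangent to $X$; the planes with this property (equivalently, the degenerate configurations of this type on $X$) form a locus in $\Gr(d,\P^r)$ whose codimension is large enough to yield the bound. This is the step where one needs dimension estimates for secant and tangent loci, of the kind developed in \cite{projection}. Granting it, $\dim\mathcal B\le\dim\mathcal P+(n+c)-c\deg Y-\deg\T_Y$, and comparing with $\dim\mathcal B\ge\dim\mathcal P$ forces $(n+c)-c\deg Y-\deg\T_Y\ge0$, that is $\deg Y+\tfrac1c\deg\T_Y\le\tfrac nc+1$. (Having to rule out these degenerate configurations is the same phenomenon that forces the structural hypotheses on fibers in the classical results for small $n$.)
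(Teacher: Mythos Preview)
Your tangent-space computation bounding $\dim H$ by $n\deg Y-\deg\T_Y$ is correct and is exactly the computation the paper makes (there it appears as the upper bound $\deg M'\le n\deg Y-\deg\T_Y$ coming from the sequence $0\to\T_Y\to\T_X|_Y\to N_{Y/X}$). The difficulty is precisely the step you flag and then ``grant'': controlling the strata where $\langle Z\rangle$ has dimension $d<\deg Y-1$. This is not a technicality that can be dispatched by citing secant/tangent estimates from \cite{projection}; those estimates do not give $h_d\le (d+1)(n+c)-c\deg Y-\deg\T_Y$ in general, and the examples in the paper (e.g.\ the Reye Enriques surface, where the relevant fibers contain three \emph{collinear} points) show that the dominant contribution can and does come from a degenerate stratum. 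So the naive dimension count over the space of centers $\Sigma$ genuinely breaks down, and there is no cheap repair.

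The paper avoids this stratification entirely. Instead of counting centers, it works on the side of the $(r-n-c)$-planes $\Lambda$ and replaces the dimension count by a module-theoretic bound: for the subvariety $H\subset\G_{n+c}$ of $\Lambda$'s admitting an embedding $Y\hookrightarrow\Lambda\cap X$, one forms the $\O_Y$-submodule $M'\subset N_{\Lambda/\P^r}|_Y$ generated by the image of $T_{H,[\Lambda]}$ and proves two inequalities for $\deg M'$. The upper bound is your computation. The lower bound, $\deg M'\ge (n+c)\deg Y-(n+c)$, is the content of Theorem~\ref{invariant}: it comes from showing that the quotient $Q$ of $N_{\Lambda/\P^r}|_Z$ by the analogous submodule satisfies $\deg Q\le n+c$, and this in turn is proved by a twist trick (Lemma~\ref{f} and Proposition~\ref{dim0}) showing that already $H^0(N_{\Lambda/\P^r}(-1))\cong k^{n+c}$ surjects onto $Q$. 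That argument never sees $\langle Z\rangle$ at all, which is why it is insensitive to whether the points of $Y$ are in linearly general position. The missing idea in your approach is exactly this replacement of a parameter-space dimension count by a bound on the length of a cokernel, obtained by comparing the images of $H^0(N_{\Lambda/\P^r})$ and $H^0(N_{\Lambda/\P^r}(-1))$.
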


This result easily implies (the special case for projections) of Mather's result bounding the
Thom-Boardman invariants (itself a special case of the transversality theorem he proves.)
This is because, as Mather shows, the Thom Boardman invariant of a germ at a point
 is determined by knowing whether certain subschemes are or are not contained in the fiber.
By way of example, we carry out the proof of the following useful special case:

\begin{corollary}[Mather] \label{thm1}
Let $X \subset \P^r$ be a smooth subvariety of dimension $n$, and let $\pi: X \to \P^{n+c}$ be a general projection with $c \geq 1$. 
Let $p$ be a point in $\P^{n+c}$, 
and assume $\pi^{-1}(p)$ consists of $r$ distinct points $q_1, \dots, q_r$. Denote by 
$d_i$ the corank of $\pi$ at $q_i$. Then we have 
$$ 
\sum_{ 1 \leq i \leq r} (\frac{d_i^2}{c}+d_i+1) \leq \frac{n}{c}+1.
$$ 
In particular, the number of distinct points in every fiber is bounded by $n/c+1$.
\end{corollary}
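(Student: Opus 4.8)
The strategy is to deduce the corollary from Theorem \ref{one scheme} by constructing, for each point $q_i$ in the fiber, a suitable zero-dimensional subscheme $Y_i$ of $X$ supported at $q_i$ whose degree and tangent-sheaf degree are large enough to force the stated inequality, and then taking $Y = \bigcup_i Y_i$. The natural candidate at a point $q$ where $\pi$ has corank $d$ is the subscheme cut out by the "second-order" data of the map: if we choose local coordinates so that $d\pi$ has rank $n-d$, the fiber of $\pi$ through $q$ contains the scheme $Y_q$ defined by the maximal ideal squared in the $d$ "vertical" directions together with the remaining coordinates — concretely, a fat point of the form $\Spec k[x_1,\dots,x_d]/\gm^2$, which has degree $d+1$. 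The key local computation is that this $Y_q$ is genuinely contained in the scheme-theoretic fiber $\pi^{-1}(p)$: the corank-$d$ hypothesis says exactly that the $d$ vertical directions map to zero under $d\pi$, so they survive to (at least) first order in the fiber.

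Next I would compute $\deg Y_q$ and $\deg \T_{Y_q}$ for $Y_q = \Spec k[x_1,\dots,x_d]/\gm^2$. Here $\deg Y_q = d+1$, and $\Omega_{Y_q/k}$ is the $\cO_{Y_q}$-module with $d$ generators $dx_1,\dots,dx_d$ and relations coming from $\gm^2 = 0$, which one checks makes $\Omega_{Y_q/k}$ free of rank $d$ over the residue field $k$ after killing $\gm$; dualizing, $\T_{Y_q} = \Hom_{\cO_{Y_q}}(\Omega_{Y_q/k},\cO_{Y_q})$ has length $d^2$ (it is $\Hom_k(k^d, k^d)$ once one observes the relevant module structure). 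Thus the quantity $\deg Y_q + \frac1c \deg \T_{Y_q}$ equals $d+1 + d^2/c$, which is precisely the summand appearing in the corollary. Taking the disjoint union $Y = \coprod_{i=1}^r Y_{q_i}$ (possible since the $q_i$ are distinct, so the supports are disjoint and degrees and tangent-sheaf degrees add), the hypothesis of Theorem \ref{one scheme} is satisfied because each $Y_{q_i}$ sits in $\pi^{-1}(p)$, and the conclusion of that theorem reads
$$
\sum_{i=1}^r \left( d_i + 1 + \frac{d_i^2}{c} \right) \leq \frac{n}{c} + 1,
$$
which is the claimed inequality. The final sentence about the number of distinct points follows by dropping the $d_i^2/c$ and $d_i$ terms (each $\geq 0$), leaving $r \leq n/c + 1$.

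The main obstacle is the local computation of $\deg \T_{Y_q}$: one must be careful that $\Omega_{Y_q/k}$ is \emph{not} a free $\cO_{Y_q}$-module, so computing its dual requires understanding the module structure explicitly rather than by a naive rank count, and one must verify that $\Hom_{\cO_{Y_q}}(\Omega_{Y_q/k}, \cO_{Y_q})$ indeed has length exactly $d^2$ and not merely $\geq d^2$ or $\leq d^2$ (the inequality direction that matters for the corollary must be pinned down). A secondary point requiring care is the claim that $Y_q$ as defined actually embeds in the scheme-theoretic fiber: this uses the corank hypothesis together with the fact — which one should state cleanly, perhaps invoking generic smoothness of $X$ and the structure of general projections — that at a point of corank $d$ the fiber contains the full first-order neighborhood in the kernel directions of $d\pi$. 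Once these two local facts are in hand, the globalization and the application of Theorem \ref{one scheme} are formal.
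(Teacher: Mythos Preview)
Your proposal is correct and follows essentially the same route as the paper: you take $Y$ to be the disjoint union of the first-order fat points $\Spec k[x_1,\dots,x_{d_i}]/\gm^2$ at the $q_i$, compute $\deg Y = \sum (d_i+1)$ and $\deg \T_Y = \sum d_i^2$, and apply Theorem~\ref{one scheme}. The paper's proof is terser---it simply asserts the surjection $\O_{\pi^{-1}(p),q_i} \to A_{d_i}$ and the value $\deg \T_Y = \sum d_i^2$---whereas you correctly flag both of these as the points requiring care; but the argument is the same.
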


Mather's approach to this theorem works because the subschemes involved
in defining the Thom-Boardman singularities have no moduli---there is a discrete family of 
``test schemes". In other situations it is much more common for a certain ``type'' of subscheme to appear in a fiber,
although the subschemes themselves have non-trivial moduli. We can prove a result (Theorem \ref{family of schemes})
taking the dimension of the moduli space into account that sometimes gives sharper results.
Suppose, for example, that you know that  a generic projection from the smooth $n$-fold
$X$ to $\PP^{n+1}$ always has a fiber isomorphic to one of the schemes
$Y_F := \Spec k[x,y,z]/F+(x,y,z)^5$, where $F$ varies over all nonsingular cubic forms. This 
``truncated cone over an elliptic curve'', which has degree 31, varies with one parameter of moduli. The only
obvious subscheme common to all the $Y_F$ is 
$\Spec k[x,y,z]/(x,y,z)^3$. 
With 
Theorem \ref{one scheme} we get the bound
$n\geq 36$. But if we apply Theorem \ref{family of schemes} to the 1-dimensional moduli
family of $Y_F$,
we get the much stronger bound
$n \geq 69$.

One motivation for the study of the complexity of the fibers of general projections comes from 
the Eisenbud-Goto conjecture \cite{eg}, which states that the regularity of a projective subvariety of 
$\P^r$ is $\leq \deg(X)-\codim(X)+1$. An approach to this conjecture, which has been used to prove the conjecture for smooth surfaces and to prove a 
slightly weaker bound for smooth varieties of dimension at most 5 (see \cite{laz} and \cite{kwak}), is to bound the 
the regularity of the fibers of general projections. 

\begin{conjecture} Let $X \subset \P^r$ be a smooth projective variety of dimension $n$, and let 
$\pi:X \to \P^{n+c}$ be a general linear projection. If $Z \subset X$ is any fiber, then the 
Castelnuovo-Mumford regularity of $Z $ as a subscheme of $\P^r$ is at most $n/c+1$.
\end{conjecture}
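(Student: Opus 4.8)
We sketch one approach to this conjecture, combining Theorem~\ref{one scheme} with a general-position analysis of the fibers. Since a general linear projection is finite, a fiber $Z$ is a zero-dimensional subscheme of the smooth variety $X$, so its Castelnuovo--Mumford regularity equals that of $\cI_Z$ up to the usual normalization and is computed inside the linear span $\langle Z\rangle$. The crude estimate $\reg(\cI_Z)\le\deg Z$ always holds, but by the examples recalled in the introduction $\deg Z$ is not bounded in terms of $n$ and $c$, so this is useless in general; instead the plan is to pass through the local structure of $Z$, controlling its components by Theorem~\ref{one scheme} and then recombining them.

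For the local part, write $Z=\coprod_i Z_i$ as the disjoint union of its connected components, of which there are at most $n/c+1$ by Corollary~\ref{thm1}. Fix $Z_i$, supported at a point $p$, and let $b_i$ be the Loewy length of $\cO_{Z_i}$, so that $\gm_p^{\,b_i}\cO_{Z_i}=0$ but $\gm_p^{\,b_i-1}\cO_{Z_i}\ne 0$. Then $Z_i$ is a closed subscheme of the $b_i$-th infinitesimal neighborhood $Z_i'$ of $p$ in $\P^r$, whose ideal is the $b_i$-th power of a linear ideal and hence satisfies $\reg(\cI_{Z_i'})=b_i$; and since for zero-dimensional schemes $W\subseteq W'$ forces $\reg(\cI_W)\le\reg(\cI_{W'})$ (the quotient $\cI_W/\cI_{W'}$ being a zero-dimensional sheaf, so with vanishing higher cohomology), we obtain $\reg(\cI_{Z_i})\le b_i$. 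On the other hand $b_i$ is precisely the length of a longest curvilinear subscheme $Y_i\subseteq Z_i$: in characteristic zero a general element $f$ of the maximal ideal has $f^{\,b_i-1}\ne 0$ in $\cO_{Z_i}$, giving a surjection $\cO_{Z_i}\twoheadrightarrow k[t]/(t^{b_i})$, i.e.\ a curvilinear $Y_i\subseteq Z_i$ with $\cO_{Y_i}\cong k[t]/(t^{b_i})$. Such a $Y_i$ is a closed subscheme of the fiber $Z$ and a direct computation gives $\deg\T_{Y_i}=b_i-1$, so Theorem~\ref{one scheme} forces $b_i+\tfrac1c(b_i-1)\le\tfrac nc+1$, that is $b_i\le\tfrac{n+c+1}{c+1}$. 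Hence $\reg(\cI_{Z_i})\le\tfrac{n+c+1}{c+1}\le\tfrac nc+1$ for every component, with $\tfrac nc+1-\tfrac{n+c+1}{c+1}=\tfrac{n}{c(c+1)}$ of slack.

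For the global part one must bound $\reg(\cI_Z)$ for the union $Z=\coprod_i Z_i$ in terms of the $\reg(\cI_{Z_i})$ and the mutual position of the supports $p_i$. The hope is that, $\pi$ being general, the $p_i$ together with their infinitesimal schemes $Z_i$ lie in sufficiently general position in $\P^r$ that the $Z_i$ impose independent conditions on forms of every large enough degree; combined with the local bound, the slack above, and a comparison of $\deg Z$ with $\dim\langle Z\rangle$, this would give $\reg(\cI_Z)\le n/c+1$.

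The main obstacle is the global part, and within it the general-position input. The fibers of a general projection are highly constrained --- they lie over a proper subvariety of $\P^{n+c}$, and their supports are never in general position in $X$ --- and what is really needed is that, \emph{given} their abstract isomorphism type, the fibers of $\pi$ be as generic as that type permits; this is in effect Mather's stability property, which is available only for $n$ below his bounds. Beyond that range the fibers may be degenerate within their type, and controlling the regularity then seems to call for a regularity-sensitive strengthening of the deformation-theoretic mechanism behind Theorem~\ref{one scheme}: for each $t<n/c+1$ at which $Z$ might fail to be $t$-regular, one would want to extract from that failure a single subscheme $Y^{(t)}\subseteq Z$ whose degree and tangent-sheaf degree are forced to be large, contradicting Theorem~\ref{one scheme} --- or, exploiting the moduli of the family of fibers of the given type, the sharper Theorem~\ref{family of schemes}. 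Making such an extraction uniformly, rather than curvilinear subscheme by curvilinear subscheme, is the crux.
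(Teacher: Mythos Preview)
This statement is presented in the paper as a \emph{conjecture}; the authors do not prove it, and your write-up is, appropriately, a sketch of a possible attack rather than a proof. You are right to flag the global step---controlling the regularity of the disjoint union $Z=\coprod_i Z_i$ in terms of the pieces and their mutual position---as the essential missing ingredient, and the paper offers nothing further on that point.

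There is, however, a genuine error already in the local step, which you seem to regard as settled. You assert that the Loewy length $b_i$ of $\cO_{Z_i}$ equals the maximal length of a curvilinear closed subscheme of $Z_i$, arguing that a general $f\in\gm_p$ with $f^{\,b_i-1}\ne 0$ ``gives a surjection $\cO_{Z_i}\twoheadrightarrow k[t]/(t^{b_i})$''. It does not: such an $f$ yields an \emph{injection} $k[t]/(t^{b_i})\cong k[f]\hookrightarrow\cO_{Z_i}$ of the subalgebra generated by $f$, which is not at all the same as exhibiting a quotient. The two numbers in fact differ already for $A=k[x,y]/(x^2,y^2)$: here $\gm^2=(xy)\ne 0$ and $\gm^3=0$, so the Loewy length is $3$; but the only ideal of $A$ that is one-dimensional as a $k$-vector space is $(xy)$, and $A/(xy)\cong k[x,y]/(x,y)^2$ is not curvilinear, while any ideal generated by a nonzero linear form $ax+by$ already contains $xy$ and hence has vector-space dimension $2$, giving a curvilinear quotient of length only $2$. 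Thus the longest curvilinear subscheme of $\Spec A$ has length $2<3$. Your appeal to Theorem~\ref{one scheme} for a curvilinear $Y_i$ of length $b_i$ therefore fails, and you have not bounded $b_i$---hence not bounded $\reg(\cI_{Z_i})$---in terms of $n$ and $c$. The inequality $\reg(\cI_{Z_i})\le b_i$ via the $b_i$-th infinitesimal neighborhood is fine; what is missing is an honest bound on $b_i$ (or a direct bound on $\reg(\cI_{Z_i})$) that can actually be extracted from Theorem~\ref{one scheme} or Theorem~\ref{family of schemes}.
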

 
The truth of the conjecture would imply that the Eisenbud-Goto conjecture holds up to a constant
 that depends only on $n$ and $r$ and is given explicitly in \cite{projection}. If true, the conjecture is sharp in some cases: The ``Reye Construction'' gives an Enriques
surface in $\PP^5$ whose projections to $\PP^3$ all have 3 colinear points in some fibers; and
an argument 
of Lazarsfeld shows that if $X$ is a Cohen-Macaulay variety of codimension 
2 in $\P^{n+2}$, and if $X$ is not  contained in a hypersurface of degree $\leq n$, then any projection 
of $X$ into $\P^{n+1}$ has fibers of length $n+1$. In this case any fiber is colinear. 
Since a scheme consisting of $n+1$ colinear points has regularity $n+1$, we 
get fibers of regularity $=n+1$  in these examples (see \cite{projection} for proofs.)

\section{Notation}
We will work over an algebraically closed field $k$ of characteristic zero.
If $T$ is a coherent sheaf of finite support on some scheme, we identify $T$ with its module of global
sections and write $\deg T$ for the vector space dimension of this module over $k$.

We fix $r \geq 2$, and we denote by $\G_k$ the Grassmannian of linear subvarieties of codimension 
$k$ in $\P^r$. 
Let $X$ be a smooth projective variety of dimension $n$, and let $c \geq 1$. A  
linear projection $X \to \P^{n+c}$ is determined by a sequence $l_1,\dots,l_{n+c+1}$
 of $n+c+1$ independent linear
forms on $\P^r$ that do not simultaneously vanish at any point of $X$. Associated to such a projection
is the projection center $\Sigma$, the linear space of codimension $n+c+1$ defined
by the vanishing of the $l_i$. The map taking a linear projection to the associated projection center
makes this set of projections into a $\PGL(n+c)$-bundle over $U\subset \G_{n+c+1}$
of planes $\Sigma$ that do not meet $X$.

We denote by $\pi_{\Sigma}$ the linear projection $X \to \P^{n+c}$ with center $\Sigma$. 
The morphism $\pi_{\Sigma}$ is birational, and its fibers are all zero-dimensional. 
The fibers of $\pi$ have the form $X\cap \Lambda$, where $\Lambda \in \G_{n+c}$
contains $\Sigma$. 

We will keep this notation throughout this paper.

\section{Measuring the Complexity of the Fibers}

Let $X \subset \P^r$ be a smooth subvariety of dimension $n$, 
and let $H$ be a subscheme of $\G_{n+c}$. 
For $[\Lambda] \in H$, set $Z = 
\Lambda \cap X$, and assume $\dim Z = 0$. Consider the restriction map 
$$
\rho: T_{\G_{n+c}, [\Lambda]} = H^0(N_{\Lambda/\P^r}) \to N_{\Lambda/\P^r}|_Z
$$
Let $V_G \subset N_{\Lambda/\P^r}|_Z$ be the image of $\rho$ and let $V_{H} = \rho(T_{H, [\Lambda]}) \subset V_G$.
Denote by $\O_Z V_{H} $ the $\O_Z$-submodule of $N_{\Lambda/\P^r}|_Z $ generated by $V_H$, and let $Q$ be the quotient module:
$$ 
0 \to \O_Z V_H \to N_{\Lambda/\P^r}|_Z \to Q \to 0.
$$

Here is our main technical result:

\begin{theorem}\label{invariant}
Let $X \subset \P^r$ be a smooth subvariety of dimension $n$, and let $H$ be 
a locally closed irreducible subvariety of $\G_{n+c}$, $c \geq 1$. Assume 
that for a general $[\Sigma]$ in $\G_{n+c+1}$, there is  
$[\Lambda] \in H$ such that $\Sigma \subset \Lambda$.  Then for a general $[\Lambda] \in H$, either $\Lambda \cap X$ is empty, or   
$$\deg Q  \leq n+c.$$   
\end{theorem}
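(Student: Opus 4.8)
The plan is to convert the hypothesis into an infinitesimal statement about an incidence variety, deduce from it that the linear subspace $T_{H,[\Lambda]}\subseteq H^0(N_{\Lambda/\P^r})$ generates the bundle $N_{\Lambda/\P^r}$ off a finite set of points, bound the length of the resulting quotient sheaf by $n+c$, and then restrict to $Z$.

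First, if $\Lambda\cap X$ is empty for general $[\Lambda]\in H$ there is nothing to prove, so assume it is nonempty (hence zero–dimensional) for general $[\Lambda]$. Form the incidence variety
$$\Phi=\{\,([\Sigma],[\Lambda])\in\G_{n+c+1}\times H : \Sigma\subset\Lambda\,\}.$$
The projection $\Phi\to H$ is a $\P^{\,r-n-c}$–bundle, its fiber over $[\Lambda]$ being the dual projective space parametrizing hyperplanes of $\Lambda$; hence $\Phi$ is irreducible, and the hypothesis says exactly that the first projection $p\colon\Phi\to\G_{n+c+1}$ is dominant. By generic smoothness ($\mathrm{char}\,k=0$), $dp$ is surjective at a general point of $\Phi$, and because of the bundle structure such a point is a pair $([\Sigma],[\Lambda])$ with $[\Lambda]\in H$ general and $\Sigma\subset\Lambda$ a general hyperplane. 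Now compute $T_\Phi$ there. Under the identifications $T_{\G_{n+c},[\Lambda]}=H^0(N_{\Lambda/\P^r})$ and $T_{\G_{n+c+1},[\Sigma]}=H^0(N_{\Sigma/\P^r})$, a tangent vector to $\Phi$ is a pair $(\dot\Sigma,\dot\Lambda)$ with $\dot\Lambda\in T_{H,[\Lambda]}$ such that the restriction $\dot\Lambda|_\Sigma\in H^0(N_{\Lambda/\P^r}|_\Sigma)$ equals the section of $N_{\Lambda/\P^r}|_\Sigma$ induced by $\dot\Sigma$; as $\dot\Sigma$ varies this induced section runs over all of $H^0(N_{\Lambda/\P^r}|_\Sigma)$, while $dp(\dot\Sigma,\dot\Lambda)=\dot\Sigma$. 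Therefore $dp$ is surjective if and only if the restriction map
$$\rho_\Sigma\colon T_{H,[\Lambda]}\longrightarrow H^0(N_{\Lambda/\P^r}|_\Sigma)$$
is surjective. So: for general $[\Lambda]\in H$ and general hyperplane $\Sigma\subset\Lambda$, $\rho_\Sigma$ is surjective.

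Next I extract consequences, using that $N_{\Lambda/\P^r}\cong\O_\Lambda(1)^{n+c}$ is globally generated. Let $\mathcal F\subseteq N_{\Lambda/\P^r}$ be the subsheaf generated by $T_{H,[\Lambda]}$, and $\mathcal Q=N_{\Lambda/\P^r}/\mathcal F$. If $\rho_\Sigma$ is surjective then $T_{H,[\Lambda]}$ surjects onto the global sections of the globally generated sheaf $N_{\Lambda/\P^r}|_\Sigma$ and hence generates it, so $\mathcal Q|_\Sigma=0$ for general $\Sigma$; since a closed subset of positive dimension meets every hyperplane of $\Lambda$, the sheaf $\mathcal Q$ has zero–dimensional support, i.e.\ is of finite length. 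Moreover the kernel of $H^0(N_{\Lambda/\P^r})\to H^0(N_{\Lambda/\P^r}|_\Sigma)$ is $H^0(N_{\Lambda/\P^r}(-1))\cong k^{\,n+c}$, so surjectivity of $\rho_\Sigma$ also forces $\codim_{H^0(N_{\Lambda/\P^r})}T_{H,[\Lambda]}\le n+c$.

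The crux, and the step I expect to be the hardest, is the bound $\operatorname{length}\mathcal Q\le n+c$. Twisting down, $\mathcal Q\cong\O_\Lambda^{n+c}/\mathcal F(-1)$; the cohomology sequence of $0\to\mathcal F\to N_{\Lambda/\P^r}\to\mathcal Q\to0$ together with $T_{H,[\Lambda]}\subseteq H^0(\mathcal F)$ gives $\operatorname{length}\mathcal Q\le (n+c)-\delta+h^1(\mathcal F)$ where $\delta=\dim\bigl(T_{H,[\Lambda]}\cap H^0(N_{\Lambda/\P^r}(-1))\bigr)\ge0$, so it would suffice to show $h^1(\mathcal F)\le\delta$; alternatively one can try to induct on $m=\dim\Lambda$ by general hyperplane sections of $\Lambda$, the base case $m=1$ being the statement that the cokernel of a generically surjective map $\O_{\P^1}^{\,s}\to\O_{\P^1}(1)^{n+c}$ has length $\le n+c$ (its kernel is a subsheaf of a trivial bundle, hence of non–positive degree), or one can analyze directly the degeneracy locus of the evaluation map $\O_\Lambda^{\dim T_{H,[\Lambda]}}\to N_{\Lambda/\P^r}$. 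The subtlety to overcome is that $\mathcal Q$ is concentrated at the finitely many base points of the linear system $T_{H,[\Lambda]}$, which a general hyperplane section of $\Lambda$ misses, so one must use the surjectivity of $\rho_\Sigma$ for all general $\Sigma$ rather than for a single one.

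Finally, by right–exactness of restriction, $\O_Z V_H$ is the image of $\mathcal F|_Z\to N_{\Lambda/\P^r}|_Z$, so
$$Q=(N_{\Lambda/\P^r}/\mathcal F)|_Z=\mathcal Q\otimes_{\O_\Lambda}\O_Z.$$
Since $\O_Z$ is a quotient of $\O_\Lambda$, the module $\mathcal Q\otimes_{\O_\Lambda}\O_Z=\mathcal Q/\cI_Z\mathcal Q$ is a quotient of the finite–length sheaf $\mathcal Q$, and therefore $\deg Q\le\operatorname{length}\mathcal Q\le n+c$, as required.
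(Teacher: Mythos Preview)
Your setup via the incidence variety $\Phi$ and generic smoothness is exactly what the paper does (its Lemma \ref{f}), and your deduction that $\rho_\Sigma\colon T_{H,[\Lambda]}\to H^0(N_{\Lambda/\P^r}|_\Sigma)$ is surjective for general $\Sigma$ is correct. But the argument has a genuine gap precisely where you flag it: you do not prove the bound $\operatorname{length}\mathcal Q\le n+c$. None of the three approaches you sketch is carried out, and the induction idea is, as you note yourself, obstructed by the fact that a general hyperplane section misses $\operatorname{Supp}\mathcal Q$ entirely. Without this step the proof is incomplete.

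The paper closes this gap by a short cohomological trick together with a stabilization lemma (Proposition~\ref{dim0}, quoted from \cite{projection}). In your notation: from the exact sequence $0\to\mathcal F(-1)\to\mathcal F\to\mathcal F|_\Sigma\to 0$ and your surjectivity of $\rho_\Sigma$ (hence of $H^0(\mathcal F)\to H^0(\mathcal F|_\Sigma)$), plus $H^1(\mathcal F|_\Sigma)=H^1(\O_\Sigma(1)^{n+c})=0$, one gets $H^1(\mathcal F(-1))\cong H^1(\mathcal F)$. This says that the images of $H^0(\O_\Lambda^{n+c})$ and of $H^0(\O_\Lambda(1)^{n+c})$ in $\mathcal Q$ have the \emph{same} dimension. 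Proposition~\ref{dim0} then asserts that whenever a globally generated sheaf $A$ surjects onto $B$ and the images of $H^0(A)$ and $H^0(A(1))$ in $B$ have equal dimension, that common image is all of $H^0(B)$. Applying it with $A=\O_\Lambda^{n+c}$ and $B=\mathcal Q$ gives that $k^{n+c}=H^0(\O_\Lambda^{n+c})$ already surjects onto $\mathcal Q$, i.e.\ $h^1(\mathcal F(-1))=0$ and $\operatorname{length}\mathcal Q\le n+c$. The paper in fact runs this argument with $F=\ker(N_{\Lambda/\P^r}\to Q)$ in place of your $\mathcal F$, obtaining directly that $H^0(N_{\Lambda/\P^r}(-1))\to Q$ is surjective and hence $\deg Q\le\dim H^0(\O_\Lambda^{n+c})=n+c$, without the detour through an auxiliary $\mathcal Q$ on $\Lambda$.
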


The proof uses  the following result, which will also be used in the proof of
Theorem \ref{trans}
 
 \begin{lemma}\label{f}
Let $X$ be a smooth variety of dimension $n$ in $\P^r$, and let $H$ be a smooth locally closed 
subvariety of 
$\G_{n+c}$. Assume that for a general  $[\Sigma]$ in $\G_{n+c+1}$, there is $[\Lambda] \in H$ such that 
$\Sigma \subset \Lambda$. Let $[\Sigma]$ be a general point of $\G_{n+c+1}$ and let $[\Lambda]$ be a point of 
$H$ such that $\Sigma \subset \Lambda$. If $Q$ is as in Theorem \ref{invariant}, then the map 
$$
H^0(N_{\Lambda/\P^r} \otimes \O_{\Lambda}(-1)) \to Q
$$ 
is surjective. 
\end{lemma}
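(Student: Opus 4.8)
The plan is to realize the projection center $\Sigma$ as the base locus of a net (or larger linear system) of hyperplanes through $\Lambda$, and to translate the hypothesis ``a general $\Sigma$ lies in some $\Lambda \in H$'' into a statement about the differential of the incidence map. Concretely, let $I = \{([\Sigma],[\Lambda]) : \Sigma \subset \Lambda,\ [\Lambda]\in H\} \subset \G_{n+c+1}\times H$ and let $p, q$ be the two projections. The fiber of $q$ over $[\Lambda]$ is the set of codimension-one linear subspaces of $\Lambda$, which is a projective space canonically identified with $\PP(H^0(N_{\Lambda/\PP^r}\otimes \O_\Lambda(-1))^\vee)$ — a hyperplane of $\Lambda$ corresponds to a section of $\O_\Lambda(1)$, equivalently to a global section of $N_{\Lambda/\PP^r}\otimes\O_\Lambda(-1)$ after twisting. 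Thus $T_{I,([\Sigma],[\Lambda])}$ sits in an exact sequence involving $T_{H,[\Lambda]}$ and $H^0(N_{\Lambda/\PP^r}\otimes\O_\Lambda(-1))$, and the hypothesis that $p$ is dominant says that $dp$ is surjective at a general point, i.e. $T_{\G_{n+c+1},[\Sigma]} = H^0(N_{\Sigma/\PP^r})$ is hit.

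Next I would set up the two restriction maps side by side. On one hand $\rho\colon H^0(N_{\Lambda/\PP^r}) \to N_{\Lambda/\PP^r}|_Z$ defines $V_G$, $\O_Z V_H$ and the quotient $Q$. On the other hand, because $Z = \Lambda\cap X \subset \Sigma$ would force $Z$ to meet $\Sigma$ — but $\Sigma$ is general hence disjoint from $X$, so $Z\cap\Sigma=\emptyset$, and therefore every section of $\O_\Lambda(1)$ vanishing on a hyperplane through $\Sigma$ still restricts nontrivially to $Z$; more to the point, the sub-linear-system of $|\O_\Lambda(1)|$ of hyperplanes containing $\Sigma$ has $Z$-base-point-free restriction. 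The surjectivity of $dp$ chases to the statement: the composite
$$
T_{H,[\Lambda]} \oplus H^0(N_{\Lambda/\PP^r}\otimes\O_\Lambda(-1)) \longrightarrow N_{\Lambda/\PP^r}|_Z
$$
is surjective, where the first summand maps via $\rho$ (landing in $V_H\subset \O_Z V_H$) and the second via the natural ``evaluate at $Z$ after multiplying by the linear forms cutting out $\Sigma$'' map. Since the image of the first summand is contained in $\O_Z V_H$, reducing modulo $\O_Z V_H$ kills it, and we conclude that $H^0(N_{\Lambda/\PP^r}\otimes\O_\Lambda(-1)) \to Q$ is surjective, as desired.

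The main obstacle I anticipate is the bookkeeping in the first step: carefully identifying the tangent space to the fiber $q^{-1}([\Lambda])$ with $H^0(N_{\Lambda/\PP^r}\otimes\O_\Lambda(-1))$ (equivalently with the degree-one part of the homogeneous coordinate ring of $\Lambda$, modulo the one-dimensional space corresponding to $\Lambda$ itself), and then verifying that under this identification the differential of ``restrict to $Z$'' becomes precisely the map $H^0(N_{\Lambda/\PP^r}\otimes\O_\Lambda(-1)) \to N_{\Lambda/\PP^r}|_Z$ whose image we need. One has to be attentive that $\Sigma$ being general guarantees both that $\Sigma\cap X=\emptyset$ (so $Z$ is genuinely disjoint from $\Sigma$ and the twisted-section map is the right one) and that $dp$ is surjective there (generic smoothness / the dominance hypothesis); granting those two facts, the surjectivity onto $Q$ is a formal consequence of the exact sequence defining $Q$ together with the surjectivity of $dp$. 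I would also note that replacing $H$ by its smooth locus, as in the statement, is harmless because we only ever work at a general $[\Lambda]$.
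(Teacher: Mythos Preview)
Your setup with the incidence correspondence $I$ and the appeal to generic smoothness for the surjectivity of $dp$ is correct and matches the paper. The argument breaks down after that, however, and the gap is not just bookkeeping.

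First, the identification you propose is wrong: the fiber $q^{-1}([\Lambda])$ is the dual projective space $|\O_\Lambda(1)|$, and its tangent space at $[\Sigma]$ is $H^0(N_{\Sigma/\Lambda})\cong H^0(\O_\Sigma(1))$, of dimension $r-n-c$, not $H^0(N_{\Lambda/\P^r}\otimes\O_\Lambda(-1))\cong k^{n+c}$. (The latter space is the tangent at $[\Lambda]$ to the locus of $\Lambda$'s containing a \emph{fixed} $\Sigma$, which is a different object.) More seriously, what surjectivity of $dp$ actually buys you is that the restriction $T_{H,[\Lambda]}\to H^0(N_{\Lambda/\P^r}|_\Sigma)$ is onto---a statement about restriction to $\Sigma$, not to $Z$, and $Z\cap\Sigma=\emptyset$. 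Your asserted surjection $T_{H,[\Lambda]}\oplus H^0(N_{\Lambda/\P^r}(-1))\to N_{\Lambda/\P^r}|_Z$ is in general false by a dimension count: the source has dimension $\dim T_{H,[\Lambda]}+(n+c)$ while the target has dimension $(n+c)\deg Z$. So there is nothing to reduce modulo $\O_Z V_H$.

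The paper fills this gap with an extra idea you are missing. From the surjectivity onto $H^0(N_{\Lambda/\P^r}|_\Sigma)=H^0(F|_\Sigma)$ (where $F=\ker(N_{\Lambda/\P^r}\to Q)$) one gets $H^1(F(-1))\cong H^1(F)$, hence the images of $H^0(N_{\Lambda/\P^r}(-1))\to Q$ and $H^0(N_{\Lambda/\P^r})\to Q$ have the \emph{same} dimension. At this point one still does not know either map is surjective; that is supplied by a separate lemma (Proposition~\ref{dim0}, quoted from \cite{projection}): if a surjection of sheaves $A\to B$ with $A$ globally generated has the property that the image of $H^0(A)$ in $H^0(B)$ does not grow after twisting by $\O(1)$, then $H^0(A)\to H^0(B)$ was already surjective. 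This bootstrap is the step doing the real work, and your proposal has no substitute for it.
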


\begin{proof}
The restriction map $N_{\Lambda/\P^r} \to N_{\Lambda/\P^r} |_Z$ followed by the surjective map 
$N_{\Lambda/\P^r} |_Z \to Q$ gives a surjective map 
of $\O_{\Lambda}$-modules $N_{\Lambda/\P^r} \to Q$. We denote 
the kernel by $F$:
$$ 
0 \to F \to N_{\Lambda/\P^r} \to Q \to 0.
$$

We first show that the restriction map  $H^0(F) \to H^0(F|_\Sigma)$ is surjective. Consider the incidence correspondence 
$$
J =\{([\Sigma], [\Lambda]): \Sigma \subset \Lambda, [\Lambda] \in H \} 
\subset \G_{n+c+1} \times H,
$$  
and assume that $([\Sigma],[\Lambda])$ is  a general point of $J$. By our assumption the projection map $\pi_1: J \to \G_{n+c+1}$ is dominant. 
Since $H$ is smooth, and since the fibers of the projection map $\pi_2: J \to H$ are smooth, $J$ is smooth as well. Thus by generic smoothness, $\pi_1$ is smooth at  $([\Sigma], [\Lambda])$, and so  the map on Zariski tangent spaces $T_{J, ([\Sigma], [\Lambda])} \to T_{\G_{n+c+1}, [\Sigma]}$ is surjective. 

The short exact sequence of $\O_{\Sigma}$-modules 
 $$
  0 \to N_{\Sigma/\Lambda} \to N_{\Sigma/\P^r} \to 
 N_{\Lambda/\P^r}|_{\Sigma} \to 0
 $$
gives a surjective map $H^0(N_{\Sigma/\P^r}) \to H^0(N_{\Lambda/\P^r}|_{\Sigma})$. Note that since $\Sigma$ is general, $\Sigma \cap X = \emptyset$, and since $Q$ is supported on $\Lambda\cap X$, $F|_{\Sigma} 
= N_{\Lambda/\P^r}|_{\Sigma}$. It follows from the following commutative diagram
 $$
 \xymatrix{ T_{J, ([\Sigma], [\Lambda])} \ar@{>>}[r] \ar[d] & T_{\G_{n+c+1}, [\Sigma]} = 
 H^0(N_{\Sigma/\P^r}) \ar@{>>}[r] & H^0(N_{\Lambda/\P^r}|_{\Sigma}) 
 \ar[d]^{=} \\
 T_{H, [\Lambda]} \ar[r] & H^0(F) \ar[r] & H^0(F|_{\Sigma})
 }
 $$
then that $H^0(F) \to H^0(F|_\Sigma)$ is surjective.

Consider now the short exact sequence 
$$ 
 0 \to F \otimes \O_{\Lambda}(-1) \to F \to F|_{\Sigma} \to 0.
$$
Since $F|_{\Sigma} = N_{\Lambda/\P^r}|_{\Sigma}$, we have 
$H^1(F|_{\Sigma}) = H^1(N_{\Lambda/\P^r}|_{\Sigma})= 0$. Since  $H^0(F) \to H^0(F|_\Sigma)$ is surjective, 
we get $H^1(F \otimes \O_{\Lambda}(-1)) \cong  H^1(F)$.
Therefore, the image of the map $H^0(N_{\Lambda/\P^r}) \to Q$ is the same as the image of 
the map $H^0(N_{\Lambda/\P^r}\otimes \O_{\Lambda} (-1)) \to Q$, and thus  
by Proposition \ref{dim0}
both of these maps are surjective. 
\end{proof}

\begin{proposition}{\cite[Proposition 3.1]{projection}}\label{dim0} 
Suppose that $\delta:A\to B$ is an epimorphism of 
coherent sheaves on $\PP^r$,
and suppose that $A$ is generated by global sections.
If $\delta(H^0(A)) \subset H^0(B)$ has the same dimension
as  $\delta(H^0(A(1))) \subset H^0(B(1))$, then $\dim B = 0$ and
$\delta(H^0(A(m)))=H^0(B(m))\cong H^0(B)$ for all $m\geq 0$.\qed
\end{proposition}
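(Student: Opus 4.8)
The plan is to work with the subspaces $W_m:=\delta(H^0(A(m)))\subseteq H^0(B(m))$, so that the hypothesis reads $\dim W_0=\dim W_1$ and the goals are $\dim B=0$ together with $W_m=H^0(B(m))\cong H^0(B)$ for all $m\geq 0$. Two simple observations will do most of the work: since $A$, hence each $A(m)$, is generated by global sections, $W_m$ generates $B(m)$ as an $\cO$-module; and for every linear form $\ell$ one has $\ell\,W_m\subseteq W_{m+1}$, because $\ell\,\delta(s)=\delta(\ell s)$. My strategy is to fix a \emph{general} linear form $h$ and use the single equality $\dim W_0=\dim W_1$ to force the multiplication-by-$h$ map $B\to B(1)$ to be an isomorphism; both conclusions then drop out.

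First I would invoke the standard fact that a general linear form $h$ is a nonzerodivisor on $B$---its hyperplane $H$ avoids the finitely many associated points of $B$---so multiplication by $h$ is injective on global sections in every twist. In particular it carries $W_0$ injectively into $W_1$, and since these spaces have the same dimension it maps $W_0$ isomorphically onto $W_1$; hence $W_1=h\,W_0$. Two things follow. (i) Every section in $W_1=h W_0$ restricts to zero on $H$, and since $W_1$ generates $B(1)$ this gives $B|_H=0$; thus the support of $B$ misses the general hyperplane $H$, which forces it to be finite, i.e.\ $\dim B=0$. (ii) Because $B(1)$ is generated by $h W_0$, it equals the image of multiplication by $h$, so $h\colon B\to B(1)$ is surjective; being also injective, it is an isomorphism.

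Now I would use the invertibility of $h$. Set $U:=\PP^r\setminus H$, an affine space containing the (finite) support of $B$, and let $R=\Gamma(U,\cO_{\PP^r})$ be its coordinate ring. For each linear form $\ell$, the composite $\phi_\ell:=h^{-1}\circ(\text{multiplication by }\ell)$ is an $\cO$-linear endomorphism of $B$ which on $U$ is multiplication by $\ell/h\in R$. From $\ell\,W_0\subseteq W_1=h W_0$ we get $\phi_\ell(W_0)\subseteq W_0$ for every $\ell$, and since the functions $\ell/h$ generate the $k$-algebra $R$ this says $W_0$ is an $R$-submodule of $H^0(B)=\Gamma(U,B)$. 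But $W_0$ generates $B$, hence generates the $R$-module $\Gamma(U,B)$, and a generating submodule is the whole module, so $W_0=H^0(B)$. Finally $W_m\supseteq h^m W_0=h^m H^0(B)=H^0(B(m))$, using that $h^m\colon B\to B(m)$ is an isomorphism, so $W_m=H^0(B(m))$, and powers of $h$ identify all of these with $H^0(B)$. I expect the crux to be exactly the step from the single equality $\dim W_0=\dim W_1$ to a statement in every degree: a priori the dimensions $\dim W_m$ need not stabilize after one step, and the mechanism that unlocks it is that this lone equality already makes $h$ act invertibly on $B$, after which $W_1=h W_0$ rigidifies $W_0$ into a submodule of $H^0(B)$ that still generates $B$ and hence exhausts it. The auxiliary input---that a general linear form is a nonzerodivisor---is standard but is genuinely needed to launch the argument.
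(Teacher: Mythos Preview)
The paper does not actually prove this proposition: it is quoted verbatim from the companion paper \cite{projection} and closed with a \qed, so there is no ``paper's own proof'' to compare against. That said, your argument is correct and self-contained. The key steps---choosing a general linear form $h$ that is a nonzerodivisor on $B$, deducing $W_1=hW_0$ from the dimension equality, concluding that $B$ is supported off $H$ (hence zero-dimensional) because $W_1$ generates $B(1)$, and then using invertibility of $h$ to show $W_0$ is an $R$-submodule of $H^0(B)$ that generates and therefore equals it---are all sound. The only point worth a remark is that once you know $\dim B=0$ and $\operatorname{Supp}B\cap H=\emptyset$, the isomorphism $h\colon B\to B(1)$ is immediate (since $h$ is a unit on the support), so you do not separately need the surjectivity argument via $W_1$ generating $B(1)$; but what you wrote is also fine.
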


\begin{proof}[Proof of Theorem \ref{invariant}]
Assume that for a general $[\Lambda]$ in $H$, $\Lambda\cap X$ is non-empty. It follows from Lemma \ref{f}, applied to the smooth locus in $H$,
 that the map $H^0(N_{\Lambda/\P^r} 
\otimes \O_{\Lambda}(-1)) \to Q$ is surjective. Therefore, 
$$
\deg Q  \leq \dim H^0(N_{\Lambda/\P^r} 
\otimes \O_{\Lambda}(-1)) = \dim H^0(\O_{\Lambda}^{n+c}) = n+c.
$$
\end{proof}

Since we have 
$$
(n+c) \deg Z = \deg N_{\Lambda/\P^r}|_Z = \deg \O_Z V_H + \deg Q,
$$
where $Z = \Lambda\cap X$, it follows from the above theorem that
any upper bound on the degree of $\O_Z V_H$ puts some restrictions on the fibers of $\pi_{\Sigma}$ for general $\Sigma$. 
 
\begin{corollary}\label{fiber-bound}
Let $X \subset \P^r$ be a smooth projective variety of dimension $n$, and let $c \geq 1$ be an integer. Let $H$ be a locally closed 
irreducible subvariety of $\G_{n+c}$, and assume that for a general projection $\pi_{\Sigma}$, there is 
$[\Lambda] \in H$ that contains $\Sigma$. Then for a general $[\Lambda] \in H$    
$$
\deg Z \leq \frac{\deg \O_Z V_H}{n+c} + 1.
$$
where $Z=\Lambda\cap X$.
\end{corollary}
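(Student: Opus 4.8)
The plan is to read the corollary off from Theorem \ref{invariant} together with the additivity of length in the short exact sequence that defines $Q$, so the argument is essentially a one-line computation once the hypotheses have been matched up.

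First I would record the elementary fact that $\deg N_{\Lambda/\P^r}|_Z = (n+c)\,\deg Z$. Indeed, $[\Lambda] \in \G_{n+c}$ is a linear subspace of codimension $n+c$ in $\P^r$, so $N_{\Lambda/\P^r} \cong \O_{\Lambda}(1)^{\oplus(n+c)}$; restricting this locally free sheaf of rank $n+c$ to the finite scheme $Z = \Lambda \cap X$ gives a free $\O_Z$-module of rank $n+c$, whose $k$-dimension is $(n+c)\,\deg Z$. This is the same identification (in dual form) already used at the end of the proof of Theorem \ref{invariant}, where $N_{\Lambda/\P^r}\otimes\O_{\Lambda}(-1)\cong\O_{\Lambda}^{n+c}$.

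Next, since $\deg$ (that is, $k$-dimension) is additive on $0 \to \O_Z V_H \to N_{\Lambda/\P^r}|_Z \to Q \to 0$, I obtain $(n+c)\,\deg Z = \deg \O_Z V_H + \deg Q$. Now I apply Theorem \ref{invariant} to the irreducible locally closed $H \subset \G_{n+c}$: for a general $[\Lambda] \in H$, either $\Lambda \cap X$ is empty, in which case $\deg Z = 0$ and the asserted inequality is trivial, or $\deg Q \leq n+c$. In the second case the displayed identity gives $(n+c)\,\deg Z \leq \deg \O_Z V_H + (n+c)$, and dividing by $n+c$ yields $\deg Z \leq \frac{\deg \O_Z V_H}{n+c} + 1$.

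The only point needing comment — and the closest thing to an obstacle, though it is not a serious one — is the compatibility of hypotheses. Theorem \ref{invariant} assumes that for a general $[\Sigma] \in \G_{n+c+1}$ there is some $[\Lambda] \in H$ with $\Sigma \subset \Lambda$, whereas the corollary assumes this for a general projection $\pi_{\Sigma}$. Since the set of linear projections $X \to \P^{n+c}$ is a $\PGL(n+c)$-bundle over the open set $U \subset \G_{n+c+1}$ of planes missing $X$, ``general projection'' and ``general center $[\Sigma]$'' impose exactly the same condition on $\Sigma$, so Theorem \ref{invariant} applies verbatim and the corollary follows.
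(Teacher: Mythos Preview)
Your proof is correct and follows essentially the same route as the paper: the paper records the identity $(n+c)\deg Z = \deg N_{\Lambda/\P^r}|_Z = \deg \O_Z V_H + \deg Q$ immediately before the corollary and then invokes Theorem~\ref{invariant}, exactly as you do. Your additional remarks on the empty-intersection case and on the equivalence of ``general projection'' with ``general center'' are fine clarifications that the paper leaves implicit.
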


For example, if we apply Theorem \ref{invariant} to the scheme $H\subset \G_{n+c}$ whose points 
correspond to planes that
intersect $X$ in schemes of length $\geq l$, for some integer $l\geq 1$, we recover
the central result  of our paper \cite{projection}. recall that for varieties $X,Y \subset \PP^r$
that meet in a scheme $Z=X\cap Y$ of dimension zero, with 
$\codim Y -\dim X>0$ we there  defined
$q(X,Y)$ to be
$$
q(X,Y) = \frac{ \deg \coker \Hom(\cI_{Z/X}/\cI_{Z/X}^2, \cO_Z) \to \Hom(\cI_{Y/P}/\cI_{Y/P}^2, \cO_Z)}
{\codim Y -\dim X}.
$$
For example, if $X, Y$ are smooth and $Z$ is a locally complete intersection scheme then $q(X,Y) = \deg X\cap Y$,
and more generally $q$ is a measure of the difficulty of flatly deforming $Y$ in such a way that $Z=X\cap Y$ 
deforms flatly as well.
\begin{theorem}\label{main1}
If $X$ is a smooth projective variety of dimension $n$ in $\P^r$, 
and if $\pi: X \to \P^{n+c}$ is a general projection, then 
every fiber $X\cap \Lambda$, where $\Lambda$ is a linear
subspace containing the projection center in codimension 1, satisfies:
$$
q(X,\Lambda)\leq \frac{n}{c}+1.
$$
\end{theorem}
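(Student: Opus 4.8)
The plan is to deduce Theorem~\ref{main1} from Theorem~\ref{invariant}, applied to the stratum of $\G_{n+c}$ consisting of linear spaces that meet $X$ in a scheme of a fixed degree, and then to identify the quotient $Q$ appearing there with the module governing $q(X,\Lambda)$. Fix a general projection $\pi_\Sigma$ and a fiber $Z=X\cap\Lambda$, with $\Sigma\subset\Lambda$ of codimension $1$, and set $l=\deg Z$ and $P=\P^r$. For $l'\ge 1$, let $S_{l'}\subset\G_{n+c}$ be the locally closed locus of $[\Lambda']$ with $\dim(X\cap\Lambda')=0$ and $\deg(X\cap\Lambda')=l'$; this is a flattening stratum, over which the universal intersection is flat of degree $l'$, and (since $\dim X+\dim\Lambda'<r$ for $c\ge1$) it is a proper subvariety of $\G_{n+c}$. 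First I would pass to a smooth irreducible component $H$ of $S_l$ through $[\Lambda]$ and verify the hypothesis of Theorem~\ref{invariant}, namely that the incidence variety $J=\{([\Sigma'],[\Lambda']):\Sigma'\subset\Lambda',\ [\Lambda']\in H\}$ dominates $\G_{n+c+1}$; granting this, Theorem~\ref{invariant} gives $\deg Q\le n+c$ for the $[\Lambda]$ at hand, where $Q=N_{\Lambda/P}|_Z/\cO_Z V_H$ and $V_H=\rho(T_{H,[\Lambda]})$.

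The heart of the argument is to show that $\cO_Z V_H$ lies inside the image of the map that defines $q(X,\Lambda)$, namely the natural $\cO_Z$-linear map
$$
\alpha:\ N_{Z/X}=\Hom_{\cO_Z}(\cI_{Z/X}/\cI_{Z/X}^2,\cO_Z)\ \longrightarrow\ \Hom_{\cO_Z}(\cI_{\Lambda/P}/\cI_{\Lambda/P}^2,\cO_Z)=N_{\Lambda/P}|_Z ,
$$
which is dual to the surjection $\cI_{\Lambda/P}/\cI_{\Lambda/P}^2\otimes_{\cO_\Lambda}\cO_Z\twoheadrightarrow\cI_{Z/X}/\cI_{Z/X}^2$ coming from $\cI_{Z/X}=\cI_{\Lambda/P}\cdot\cO_X$. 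Given $\xi\in T_{H,[\Lambda]}\subseteq H^0(N_{\Lambda/P})=T_{\G_{n+c},[\Lambda]}$ --- a first-order deformation $\Lambda_\epsilon$ of $\Lambda$ inside $H$ --- flatness of the universal intersection over $H$ makes $X\cap\Lambda_\epsilon$ a flat first-order deformation of $Z$ in $X$, that is, an element $\bar\xi\in H^0(N_{Z/X})$. Unwinding the two maps --- lift a local generator of $\cI_{\Lambda/P}$ to $\cI_{\Lambda_\epsilon/P}$ and then restrict it to $\cO_{X_\epsilon}$ --- one checks $\rho(\xi)=\alpha(\bar\xi)$, where $\rho:H^0(N_{\Lambda/P})\to N_{\Lambda/P}|_Z$ is the restriction map. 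Hence $V_H=\rho(T_{H,[\Lambda]})\subseteq\image(\alpha)$, so $\cO_Z V_H\subseteq\image(\alpha)$ and $\coker(\alpha)$ is a quotient of $Q$. Since $\codim\Lambda-\dim X=c$, we conclude
$$
c\cdot q(X,\Lambda)=\deg\coker(\alpha)\le\deg Q\le n+c ,
$$
which is the desired bound $q(X,\Lambda)\le n/c+1$.

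The step I expect to be the main obstacle is the genericity bookkeeping needed to pass from ``the bound holds for a general member of the stratum $H$'' to ``the bound holds for every fiber of every general projection''. Since $X$ is fixed the fiber lengths are bounded, so only finitely many strata $S_{l'}$ and finitely many of their components occur, and it suffices to produce one dense open set of projection centers that is good for all of them. For each non-dominating component of each $S_{l'}$ one simply discards the (proper closed) image of its incidence in $\G_{n+c+1}$; for a dominating component $H$ one must ensure that the fiber-planes $\{[\Lambda']\in H:\Sigma\subset\Lambda'\}$ of a general center $\Sigma$ avoid the singular locus of $H$ and the proper closed complement of the locus where Theorem~\ref{invariant} applies. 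Here I would use two facts: first, that the proof of Theorem~\ref{invariant} through Lemma~\ref{f} in fact yields the bound for \emph{every} smooth point of $H$ lying over a general $[\Sigma]$, by generic smoothness of $J\to\G_{n+c+1}$; and second, Kleiman's transversality theorem in characteristic zero applied to the $\PGL(r+1)$-translates of the Schubert variety $\{[\Lambda']:\Sigma\subset\Lambda'\}$, which keeps these fiber-planes in general position with respect to the finitely many fixed proper closed ``bad'' loci. The essential mathematical content of the theorem, though, is the inclusion $\cO_Z V_H\subseteq\image(\alpha)$ of the second paragraph together with Theorem~\ref{invariant} itself.
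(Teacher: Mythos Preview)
Your proposal is correct and follows essentially the same route the paper indicates: apply Theorem~\ref{invariant} to the stratum of $\G_{n+c}$ where $\deg(X\cap\Lambda)$ is fixed, and identify $\cO_Z V_H$ as a submodule of $\image(\alpha)=\image(N_{Z/X}\hookrightarrow N_{\Lambda/\P^r}|_Z)$, exactly as the paper does in Example~\ref{modular-example} (``$V_G\cap M=V_H$, as one sees by considering morphisms from $\Spec k[t]/(t^2)$''). Your careful treatment of the genericity bookkeeping --- in particular the observation that Lemma~\ref{f} actually yields the bound at \emph{every} smooth point of $H$ lying over a general $\Sigma$, because generic smoothness of $J\to\G_{n+c+1}$ makes $d\pi_1$ surjective along the whole fiber --- supplies the passage from ``general $[\Lambda]\in H$'' to ``every fiber of a general projection'' that the paper leaves implicit here and defers to \cite{projection}.
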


In \cite{projection} we derived explicit bounds on the lengths of fibers from this result.

\subsection{ A Problem}
Fix positive integers $l$ and $m$, and let $H$ be the reduced subscheme of $\G_{n+c}$ 
consisting of those planes $\Lambda$ such that $\deg \Lambda \cap X = l$ and $
\deg \Omega_{\Lambda \cap X} = m$.  Assume that for a general projection $\pi: 
\Sigma \to \P^{n+c}$, there is a fiber $\Lambda \cap X$ such that $[\Lambda]$ is in $H$. 
We would like to use Corollary \ref{fiber-bound} in this case to get a bound 
on the fibers of general projections stronger than,
say,  that of 
Corollary 1.2 in \cite{projection}. 

Assume that $[\Lambda] \in H$ is a general point, and let 
$V_H$ be the image of $T_{H, [\Lambda]}$ in $N_{\Lambda/\P^r}|_Z$. Then 
since we assume that the length of the intersection with $X$ is fixed for all points of $H$, $V_H$ is a subspace of $N_{Z/X}$.  If we denote by $V' \subset N_{Z/X}$ the tangent space to the space of first order deformations of $Z$ in $X$ that keep the degree of $\Omega_Z$ fixed, then 
$V_H \subset V'$. 

If $Z$ is an arbitrary zero-dimensional subscheme of a smooth 
variety $X$, then 
$V'$ is not necessarily a submodule of $N_{Z/X}$. For example, if 
we let $Z$ be the subscheme of $\A^2$ defined by the ideal $I=<x^4+y^4, xy(x-y)(x+y)(x-2y)>$, then 
$Z$ is supported at the origin and is of degree 20. Using Macaulay 2, we find that the space of deformations of $Z$ in $\A^2$ 
that fix the degree of $\Omega_Z$ is a vector space of dimension 17, but the $\O_Z$-module generated by this space is a vector space of dimension 18.  

Is there an upper bound on the dimension of the submodule generated by $V_H$ that 
is stronger than $\deg N_{Z/X}$? In the special case, when $Z$ is curvilinear of degree 
$m$ (so $\deg \Omega_Z = m-1$), $V'$ is a submodule of degree = $ m \dim X - (m-1) < \deg 
N_{Z/X}$. But if $Z$ is of arbitrary degree and not curvilinear, we currently know no bound that could improve Corollary 1.2 in \cite{projection}.

\section{General Projections Whose Fibers Contain Given Subschemes}  
Fix a zero-dimensional scheme $Y$. We wish to give a bound on invariants
of $Y$ that must hold if $Y$ appears as a subscheme of a fiber of the
general projection of $X$ to $\PP^{n+c}$. Our result generalizes 
a key part of the proof of Mather's theorem bounding the Thom-Boardman
invariants of a general projection.

\def\T{{\mathcal T}}
In the following we write $\T_Y$ for the tangent sheaf $\T_Y := Hom_{\O_Y}(\Omega_Y, \O_Y)$ of $Y$, and 
similarly for $X$. 
\begin{theorem}\label{one subscheme}
Let $X \subset \P^r$ be a smooth projective variety of dimension $n$, and let $Y$ be a scheme of dimension zero. 
If for a general linear projection $\pi_{\Sigma}: X \to \P^{n+c}$, there is a fiber of  $\pi_{\Sigma}$  
that contains $Y$ as a closed 
subscheme, then 
$$ 
  \deg Y +  \frac{1}{c} \deg \T_Y \leq \frac{n}{c}+1 .
$$   

\end{theorem}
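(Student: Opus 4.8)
The plan is to deduce Theorem~\ref{one subscheme} from Theorem~\ref{invariant} (equivalently, Corollary~\ref{fiber-bound}) by choosing the subvariety $H \subset \G_{n+c}$ appropriately. First I would let $H$ be (the smooth locus of an irreducible component of) the locally closed subscheme of $\G_{n+c}$ parametrizing those $\Lambda$ for which $Z = \Lambda \cap X$ is zero-dimensional and \emph{contains $Y$ as a closed subscheme}; by hypothesis, for a general $\Sigma$ there is such a $\Lambda$ with $\Sigma \subset \Lambda$, so the hypothesis of Theorem~\ref{invariant} is satisfied. Then for a general $[\Lambda] \in H$ we get $\deg Q \leq n+c$, where $Q = N_{\Lambda/\P^r}|_Z / \O_Z V_H$, and combining with $(n+c)\deg Z = \deg \O_Z V_H + \deg Q$ this gives $\deg Z \leq \frac{1}{n+c}\deg \O_Z V_H + 1$. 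So the whole game is to show that $\O_Z V_H$ is small — more precisely, to bound $\deg \O_Z V_H$ in terms of $\deg \T_Y$ and $\deg Y$.

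The key step is to identify $V_H$, the image of $T_{H,[\Lambda]}$ in $N_{\Lambda/\P^r}|_Z$. Since the membership ``$Y \subset Z$'' is preserved along $H$, a first-order deformation coming from $H$ must move $Z$ inside $X$ while keeping $Y \subset X$ fixed; thus $V_H$ lands in the subspace of $N_{Z/X}$ of first-order deformations of $Z$ in $X$ that fix the subscheme $Y$. I would argue that the $\O_Z$-submodule generated by such deformations is contained in the kernel of the natural composite
$$
N_{Z/X} = \Hom_{\O_Z}(\cI_{Z/X}/\cI_{Z/X}^2, \O_Z) \longrightarrow \Hom_{\O_Z}(\cI_{Y/X}/\cI_{Y/X}^2 \otimes \O_Z, \O_Z),
$$
or more usefully, that $\deg \O_Z V_H \leq \deg N_{Z/X} - (\text{contribution forced by } Y)$. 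The clean bookkeeping should go: $\deg N_{Z/X} = n\deg Z$ in general is too crude since $Z$ need not be l.c.i.\ — but $\O_Z V_H$ is annihilated by whatever cuts out the locus where $Z$ fails to deform freely rel $Y$. The cleanest route is to compare with $\T_Z$: since deformations fixing $Y$ in particular restrict compatibly, and since the tangent sheaf $\T_Y$ measures the ``automorphism directions'' of $Y$ that any such family already exhausts, one gets $\deg \O_Z V_H \leq n \deg Z - \deg \T_Y$; here the term $\deg Y$ enters because $\deg Z \geq \deg Y$ is what we are ultimately bounding. Substituting into $\deg Z \leq \frac{1}{n+c}\deg \O_Z V_H + 1$ and simplifying — using $\deg Z \geq \deg Y$ on one side and the $\T_Y$ bound on the other — should produce exactly
$$
\deg Y + \frac{1}{c}\deg \T_Y \leq \frac{n}{c}+1,
$$
after clearing denominators and noting that $\deg \T_Z \geq \deg \T_Y + n(\deg Z - \deg Y)$ or a similar inequality relating the tangent sheaves of $Z$ and its subscheme $Y$.

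The main obstacle, and the step I would spend the most care on, is the module-theoretic inequality $\deg \O_Z V_H \leq n\deg Z - \deg \T_Y$ (or whatever its correct precise form is). The subtlety flagged in the paper's own ``A Problem'' subsection is that $V_H$ need not itself be an $\O_Z$-submodule of $N_{Z/X}$, so one cannot naively say ``$V_H$ avoids the directions in $\T_Y$''; one must pass to the generated submodule and control how much it can grow — exactly the phenomenon illustrated by the degree-20 example where a 17-dimensional space generates an 18-dimensional module. The right framework is presumably: the family over $H$ gives, after base change, a flat family of subschemes $Z_t \subset X$ all containing a fixed copy of $Y$; differentiating the relative tangent sheaf of this family and using that $\T_Y$ is a \emph{quotient}-type invariant (a $\Hom$ into $\O_Y$) that is exhausted by the ``trivial'' deformation directions of $Y$ sitting inside $X$, one shows the submodule $\O_Z V_H$ injects into $N_{Z/X}$ modulo a piece of rank $\deg \T_Y$. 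Making this precise — likely via a local computation at each point of the support of $Z$, organizing $N_{Z/X}$ and $\T_Y$ compatibly — is where the real work lies; everything else is formal manipulation of the inequality $(n+c)\deg Z = \deg \O_Z V_H + \deg Q$ together with $\deg Q \leq n+c$.
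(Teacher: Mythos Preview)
Your setup via Theorem~\ref{invariant} is the right one and matches the paper's framework, but the crucial step --- bounding $\deg \O_Z V_H$ --- rests on a misidentification of $V_H$. You assert that a first-order deformation of $[\Lambda]$ along $H$ ``must move $Z$ inside $X$ while keeping $Y \subset X$ fixed.'' This is not true: your $H$ parametrizes $\Lambda$ such that $\Lambda\cap X$ contains \emph{some} copy of $Y$, so along $H$ the embedding $Y\hookrightarrow X$ itself moves. Consequently $V_H$ is not contained in the kernel of the restriction $N_{Z/X}\to N_{Y/X}$, and the inequality $\deg \O_Z V_H \le n\deg Z - \deg \T_Y$ has no obvious reason to hold. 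The inequalities you suggest at the end, such as $\deg \T_Z \ge \deg \T_Y + n(\deg Z - \deg Y)$, are not true in general either (take $Z$ curvilinear with $Y$ a reduced point).

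The paper sidesteps this by two moves you are missing. First, it restricts from $Z$ to $Y$: instead of bounding the submodule of $N_{\Lambda/\P^r}|_Z$ generated by $V_H$, it bounds the submodule $M'$ of $N_{\Lambda/\P^r}|_Y$ generated by the image of $T_{H,[\Lambda]}$ there. Because the surjection $N_{\Lambda/\P^r}|_Z \to N_{\Lambda/\P^r}|_Y$ carries $M$ onto $M'$, one still has $\deg(N_{\Lambda/\P^r}|_Y/M') \le \deg Q \le n+c$, giving the lower bound $\deg M' \ge (n+c)\deg Y - (n+c)$ purely in terms of $Y$. Second, for the upper bound it realizes $H$ as the image of an incidence correspondence $I\subset \Hom(Y,X)\times \G_{n+c}$; since $T_{\Hom(Y,X),[i]}\cong \T_X|_Y$ and the natural map $\T_X|_Y\to N_{Y/X}$ has kernel exactly $\T_Y$, the image of $T_{H,[\Lambda]}$ in $N_{\Lambda/\P^r}|_Y$ lies in a submodule of degree $\le n\deg Y - \deg \T_Y$. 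Comparing the two bounds on $\deg M'$ gives $c\deg Y + \deg \T_Y \le n+c$, which is the statement. The passage to $Y$ and the use of $\Hom(Y,X)$ are precisely what make the appearance of $\T_Y$ (rather than $\T_Z$) natural; trying to extract $\T_Y$ from a bound that lives on $Z$ is where your argument stalls.
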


\begin{proof}
Let  $\Hom(Y,X)$ be the space of morphisms from $Y$ to $X$, and 
let $I \subset \Hom(Y, X) \times \G_{n+c} $ be the incidence correspondence parametrizing 
the pairs $([i], [\Lambda])$ such that $i$ is a closed immersion from $Y$ to $\Lambda\cap X$. 
Denote by $H$ the image of $I$ under the projection map $\Hom(Y,X) \times 
\G_{n+c}\to \G_{n+c}$. 
We give $H$ the reduced induced scheme structure. 

Let $([i], [\Lambda])$ be a general point of $I$, and set $Z:= \Lambda \cap X$. 
We consider $Y$ as a closed subscheme of $X$, and we let 
$N_{Y/X} =\Hom(I_{Y/X},\O_Y)$ denote the normal sheaf of $Y$ in $X$. 
The Zariski tangent space to $\Hom(Y,X)$ at $[i]$ is isomorphic to $H^0(\T_X|_Y)$ 
(\cite[Theorem I.2.16]{kollar}). 

Denote by $M'$ the $\O_Y$-submodule of $N_{\Lambda/\P^r}|_Y$ generated by the image of the 
restriction map from the Zariski tangent space:
$$
\xymatrix{
\rho_Y: T_{H,[\Lambda]} \ar@{^{(}->}[r] & H^0(N_{\Lambda/\P^r}) \ar[r] & N_{\Lambda/\P^r}|_Y.
}
$$

We first claim that $\deg M' \geq (n+c) \deg Y - (n+c)$. Let $Q'$ be the quotient of $N_{\Lambda/\P^r}|_Y$ by $M'$:
$$
0 \to M' \to N_{\Lambda/\P^r}|_Y \to Q' \to 0.
$$
Denote by $M$ the submodule of $N_{\Lambda/\P^r}|_Z$ 
generated by the image of 
$$
\xymatrix{
\rho_Z: T_{H,[\Lambda]} \ar@{^{(}->}[r] & H^0(N_{\Lambda/\P^r}) \ar[r] & N_{\Lambda/\P^r}|_Z,
}
$$
and let $Q$ be the cokernel:
$$ 
0 \to  M \to N_{\Lambda/\P^r}|_Z \to Q \to 0.
$$
The surjective map $N_{\Lambda/\P^r}|_Z \to N_{\Lambda/\P^r}|_Y$
carries  $M$ into $M'$, and thus induces a surjective map 
$Q \to Q'$. Since by Theorem \ref{invariant}, 
$\deg Q \leq n+c$, we have
$\deg Q' \leq n+c$, and so 
$$
\deg M' = \deg N_{\Lambda/\P^r}|_Y -\deg Q'  \geq (n+c) \deg Y - (n+c).
$$
This establishes the desired lower bound.

We next give an upper bound on $\deg M'$. Since $X$ is smooth, dualizing the surjective map $\Omega_X|_Y \to \Omega_Y$ into $\O_Y$, 
we get an injective map $\T_Y \to \T_X|_Y$ and an exact sequence 
$$
\xymatrix{
0 \ar[r] &\T_Y \ar[r] & \T_X|_Y \ar[r]^{\phi} & N_{Y/X}.
}
$$
Let $\pi_1$ and $\pi_2$ denote the projections maps from $I$ to $\Hom(X,Y)$ and $H$ respectively. 
We have a diagram 
$$
\xymatrix{ T_{I, ([i], [\Lambda])}  \ar@{>>}[r]^{d\pi_2} \ar[d]^{d\pi_1} & T_{H, [\Lambda]}  
\ar@{^{(}->}[r]& 
T_{\G_{n+c}, [\Lambda]}=H^0(N_{\Lambda/\P^r}) \ar[dd]^{\rho_Y}  \\
T_{\Hom(X,Y), [i]}=\T_X|_Y \ar[d]^{\phi} & & \\
N_{Y/X} \ar[rr]^{\psi} & &N_{\Lambda/\P^r}|_Y
}
$$
where $\psi$ is obtained by dualizing the 
map $I_{\Lambda/\P^r} \otimes \O_X \to I_{Y/X}$ into $\O_Y$. 

It follows from the diagram that 
$\rho_Y(T_{H,[\Lambda]})$ is contained in the image of $\psi\circ \phi$. Since the image of $\psi\circ \phi: \T_X|_Y \to N_{\Lambda/\P^r}|_Y$ 
is a submodule of $N_{\Lambda/\P^r}|_Y$, $M'$ is contained in the 
image of $\psi\circ \phi$ as well. Thus the degree of $M'$ is less than or equal to the degree of the image 
of $\phi$, that is   
$$
\deg M' \leq \deg \T_X|_Y - \deg \T_Y = n \deg Y - \deg \T_Y.
$$
Comparing this upper bound with the lower bound we got earlier completes the proof.
\end{proof}

Theorem \ref{one subscheme} was inspired by the results of Mather on the Thom-Boardman invariants.
Mather shows that the Thom-Boardman symbol of a germ of a map is determined by which of
a certain discrete set of different subschemes the fiber contains. These are the schemes of the form
$\Spec k[x_1,\dots,x_n]/(x_1)^{t_1} + (x_1,x_2) ^{t_2}+\cdots+(x_1,\dots,x_n)^{t_n}$. 
His result that the general projection is transverse to the Thom-Boardman strata is closely
related (see also Section \ref{transversality section}.) We illustrate by proving the special case 
announced in the introduction.

\begin{proof}[Proof of Corollary \ref{thm1}]
For $d \geq 1$, let $A_d = \frac{k[x_1, \dots, x_d]}{m^2}$ where $m$ is the ideal generated by 
$x_1, \dots, x_d$. If $q \in X$ is a point of corank $d$ for the projection $\pi$, then there is 
a surjective map $\O_{\pi^{-1}(\pi(q)), q} \to A_d$. 

Fix an integer $r \geq 1$, and fix a sequence of coranks $d_1 \geq \dots \geq d_r \geq 0$. If we  
denote by $Y$ the disjoint union of the schemes $\spec A_{d_i}, 1 \leq i \leq r$, then we have 
$\deg Y = \sum_{1\leq i \leq r} (d_i+1)$ and $\deg \T_Y  = \sum_{1 \leq i \leq r} d_i^2$. 

Assume now that 
for a general linear projection $\pi: X \to \P^{n+c}$, there is a fiber consisting of at least 
$r$ points $q_1, \dots, q_r$ such that the corank of $\pi$ at $q_i$ is at least $d_i$ for 
$1 \leq i \leq r$.  
Then for a general linear projection with center $\Sigma$, there is a fiber $X \cap \Lambda$ and 
a closed immersion $i: Y \to \Lambda \cap X$. It follows from the previous 
theorem that 
$$ 
\sum_{1 \leq i \leq r}(\frac{d_i^2}{c} + d_i+1) = \deg Y + \frac{1}{c} \deg \T_Y \leq \frac{n}{c}+1. 
$$
\end{proof}

Except in a few situations, such as the Thom-Boardman computation above, it is more likely
that for a general projection the fiber might be ``of a certain type'', or contain one of a given
family of special subschemes. 
In the next theorem, we generalize Theorem \ref{one subscheme} to 
such a family of zero-dimensional schemes. We have separated the proofs because this
version involves considerably more technique. But we do not repeat the final part of the argument,
since it is the same as before.

\begin{theorem}\label{family of schemes}
Let $X \subset \P^r$ be a smooth projective variety of dimension $n$.  Suppose that $B$ is an integral 
scheme of dimension $m$ and that $p: U \to B$ is a flat family of zero dimensional schemes 
over $B$. For a point $b \in B$, let 
$\T_{U_b} = \Hom(\Omega_{U_b}, \O_{U_b})$.
If for a general projection $\pi_{\Sigma}: X \to \P^{n+c}$, $c \geq 1$, there is a fiber 
$U_b$ of $p:U \to B$ such that $U_b$ can be embedded in one of the fibers of $\pi_{\Sigma}$, then 
$$ 
(1-\frac{m}{c})  \deg U_b +\min_{b \in B} \{  \frac{1}{c}  \deg \T_{U_b}\}  \leq \frac{n}{c}+1. 
 $$
\end{theorem}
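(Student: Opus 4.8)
The plan is to imitate the proof of Theorem~\ref{one subscheme}, replacing $\Hom(Y,X)$ by a parameter space of embeddings into $X$ of the fibers of $p\colon U\to B$. The new feature is that this parameter space carries up to $m$ extra ``moduli'' tangent directions, and it is precisely the contribution of these directions to the relevant $\cO_Y$-module that replaces the coefficient $\deg U_b$ of Theorem~\ref{one subscheme} by $(1-\tfrac mc)\deg U_b$; when $m=0$ the argument collapses to that theorem. Concretely, I would form the relative Hom scheme $\mathcal H\to B$ whose fiber over $b\in B$ is the (open) locus of closed immersions $U_b\hookrightarrow X$, then the incidence correspondence $I\subset\mathcal H\times\G_{n+c}$ of pairs $((b,i),[\Lambda])$ with $i(U_b)\subset\Lambda\cap X$, with projections $\pi_1,\pi_2$; let $H\subset\G_{n+c}$ be the image of $\pi_2$ with its reduced structure. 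The hypothesis says that $\{[\Sigma]:\exists\,[\Lambda]\in H,\ \Sigma\subset\Lambda\}$ is dense in $\G_{n+c+1}$; since $H$ has finitely many irreducible components, one of them, $H_0$, already has this property, and I would replace $H$ by $H_0$ and $I$ by an irreducible component $I_0$ of $\pi_2^{-1}(H_0)$ dominating $H_0$, all given reduced structures.

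Fix a general point $((b,i),[\Lambda])$ of $I_0$, put $Z=\Lambda\cap X$ and $Y=i(U_b)\cong U_b$, and let $M'\subset N_{\Lambda/\P^r}|_Y$ be the $\cO_Y$-submodule generated by $\rho_Y(T_{H_0,[\Lambda]})$, where $\rho_Y\colon T_{H_0,[\Lambda]}\hookrightarrow H^0(N_{\Lambda/\P^r})\to N_{\Lambda/\P^r}|_Y$. The lower bound on $\deg M'$ is obtained exactly as in Theorem~\ref{one subscheme}: Theorem~\ref{invariant}, applied to $H_0$, bounds by $n+c$ the degree of the cokernel of the submodule $M\subset N_{\Lambda/\P^r}|_Z$ generated by $\rho_Z(T_{H_0,[\Lambda]})$; since $Y\subset Z$, the surjection $N_{\Lambda/\P^r}|_Z\to N_{\Lambda/\P^r}|_Y$ carries $M$ into $M'$ and induces a surjection onto the cokernel $Q'$ of $M'$, so $\deg Q'\le n+c$ and
$$\deg M'\ \ge\ (n+c)\deg U_b-(n+c).$$

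For the upper bound, the kernel of $T_{\mathcal H,(b,i)}\to T_{B,b}$ is the tangent space to $\Hom(U_b,X)$ at $i$, namely $H^0(i^{*}\T_X)\cong\T_X|_Y$, so there is an exact sequence $0\to\T_X|_Y\to T_{\mathcal H,(b,i)}\to T_{B,b}$. As in the proof of Theorem~\ref{one subscheme} there is a commutative diagram relating $T_{I_0}$, $T_{H_0,[\Lambda]}$, $T_{\mathcal H,(b,i)}$, $N_{Y/X}$, and $N_{\Lambda/\P^r}|_Y$ in which $\rho_Y\circ d\pi_2=\psi\circ\Phi\circ d\pi_1$: here $\psi\colon N_{Y/X}\to N_{\Lambda/\P^r}|_Y$ is dual to $\cI_{\Lambda/\P^r}\otimes\cO_X\to\cI_{Y/X}$ exactly as in Theorem~\ref{one subscheme}, and the only new ingredient is $\Phi\colon T_{\mathcal H,(b,i)}\to N_{Y/X}$, which sends a first-order deformation of $(b,i)$ to the first-order deformation of the subscheme $i(U_b)\subset X$ that it induces (this is well defined because a closed immersion carries a flat family to a flat family) and restricts on $\T_X|_Y$ to the usual map $\phi$ gotten by dualizing $\Omega_X|_Y\to\Omega_Y$ into $\cO_Y$. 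Since $d\pi_2$ is surjective by generic smoothness (characteristic zero), $M'=\cO_Y\cdot\rho_Y(T_{H_0,[\Lambda]})\subset\cO_Y\cdot\psi(\Phi(d\pi_1(T_{I_0})))$. The crux is to bound this last module. Let $J_0\subset\mathcal H$ be the closure of the image of $\pi_1|_{I_0}$, reduced; generic smoothness gives $d\pi_1(T_{I_0})=T_{J_0,(b,i)}$ at the general point, and the image of $T_{J_0,(b,i)}$ in $T_{B,b}$ has dimension at most $\dim\overline{\pi_B(J_0)}\le m$, so $T_{J_0,(b,i)}=V_{\mathrm{vert}}+V_{\mathrm{hor}}$ with $V_{\mathrm{vert}}:=T_{J_0,(b,i)}\cap\T_X|_Y$ and $\dim V_{\mathrm{hor}}\le m$. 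Then $\psi\Phi(V_{\mathrm{vert}})\subset\image(\psi\circ\phi)$, which is an $\cO_Y$-submodule of degree $\le\deg\T_X|_Y-\deg\T_Y=n\deg U_b-\deg\T_{U_b}$ (using the injectivity of $\T_Y\hookrightarrow\T_X|_Y$), while $\psi\Phi(V_{\mathrm{hor}})$ is a $k$-subspace of dimension $\le m$ of the free $\cO_Y$-module $N_{\Lambda/\P^r}|_Y\cong\cO_Y^{n+c}$, so the submodule it generates has degree at most $m\deg\cO_Y=m\deg U_b$. Hence $\deg M'\le(n+m)\deg U_b-\deg\T_{U_b}$, and comparing with the lower bound yields
$$(c-m)\deg U_b+\deg\T_{U_b}\ \le\ n+c ;$$
dividing by $c$, using that $\deg\T_{U_b}\ge\min_{b\in B}\deg\T_{U_b}$ and that $\deg U_b$ is constant on the flat family, we obtain the asserted inequality.

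The hard part will be two things. First, setting up the commutative diagram and the map $\Phi$ with the stated properties—that is, the deformation theory that computes $T_{\mathcal H,(b,i)}$ and relates it functorially to both $N_{Y/X}$ and $N_{\Lambda/\P^r}|_Y$; this is where the passage from a single scheme to a family really enters. Second, the vertical/horizontal splitting of $T_{J_0,(b,i)}$: one must run the generic-smoothness and dimension counts even though $B$ (hence $\mathcal H$ and $J_0$) may be singular, and one must notice the elementary but decisive fact that an $m$-dimensional $k$-subspace of a free module over the Artinian ring $\cO_Y$ generates a submodule of degree at most $m\deg U_b$—this is exactly what turns the moduli dimension $m$ into an improvement of the bound. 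Everything after ``comparing with the lower bound'' is the identical arithmetic of Theorem~\ref{one subscheme}, which is why that part need not be repeated.
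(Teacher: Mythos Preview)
Your proposal is correct and follows essentially the same route as the paper: set up the relative Hom scheme, form the incidence correspondence $I$ and its image $H$ in $\G_{n+c}$, obtain the lower bound $\deg M'\ge (n+c)\deg U_b-(n+c)$ from Theorem~\ref{invariant} exactly as in Theorem~\ref{one subscheme}, and obtain the upper bound $\deg M'\le (n+m)\deg U_b-\deg\T_{U_b}$ by decomposing the tangent space of the parameter space into the ``vertical'' part $\T_X|_Y$ (contributing $n\deg U_b-\deg\T_{U_b}$) and an at-most-$m$-dimensional ``horizontal'' piece (contributing at most $m\deg U_b$ to the generated $\cO_Y$-module).

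The one genuine difference is in how the horizontal piece is controlled. The paper simply passes to a desingularization of $B$ at the outset; then the cokernel of $\T_X|_Y\hookrightarrow T_{\Hom_B(U,X\times B),[i]}$ sits inside a space of dimension $\le m$ because $B$ is smooth, and no further argument is needed. You instead keep $B$ as is, introduce the image $J_0\subset\mathcal H$ of $\pi_1$, and use generic smoothness (both for $I_0\to J_0$ and for $J_0\to\overline{\pi_B(J_0)}$) to conclude that the image of $T_{J_0,(b,i)}$ in $T_{B,b}$ has dimension $\le\dim\overline{\pi_B(J_0)}\le m$. This works, but it is doing by hand what the paper's one-line desingularization accomplishes; in particular your detour through $J_0$ is unnecessary once $B$ is smooth, since then already $d\pi_1(T_{I_0})\subset T_{\mathcal H,(b,i)}$ and the quotient by $\T_X|_Y$ has dimension $\le m$ directly.
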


\begin{proof}
Passing to a desingularization, we can assume that $B$ is smooth. Let $Hom_B(U, B \times X)$ be the functor 
 $$
Hom_B(U, X \times B)(S) = \{ B\mbox{-morphisms}: U \times_B S \to (X \times B) \times_B S \}.
$$
By \cite[I.1.10]{kollar}, this functor is represented by a scheme $\Hom_B(U, X\times B)$ over $B$ that is isomorphic to an open subscheme of 
$\Hilb( U \times X/B)$. The closed points of $\Hom_B(U, X\times B)$ parametrize morphisms from 
fibers of $p:U \to B$ to $X$.

Denote by $I \subset \Hom_B(U, X \times B) \times \G_{n+c}$ 
the incidence correspondence consisting of 
the points $([i], [\Lambda])$ such that $i$ is a closed immersion to 
$\Lambda \cap X$, and let $H$ be the image of the projection map $I \to \G_{n+c}$. We give 
$H$ the reduced structure as a subscheme of  $\G_{n+c}$. 

Assume that $([i], [\Lambda]) $ is a general point of $I$, and assume that $[i]$ represents 
the closed immersion $i: U_b \to \Lambda \cap X$, $b \in B$. Set $Y =U_b$ and $Z = \Lambda \cap X$.  
Denote by $M'$ the $\O_Y$-submodule of $N_{\Lambda/\P^r}|_Y$ generated by the image of the 
restriction map 
$$
\xymatrix{
\rho_Y: T_{H,[\Lambda]} \ar@{^{(}->}[r] & H^0(N_{\Lambda/\P^r}) \ar[r] & N_{\Lambda/\P^r}|_Y.
}
$$

As in the proof of Theorem \ref{one subscheme}
we need an upper bound on the degree of $M'$. Since $i:Y \to X$ is a closed immersion, there is a natural map on the Zariski tangent spaces:
$$
\phi: T_{\Hom_B(U, X \times B) ,[i]} \to T_{\rm Hilb(X),[Y]} = N_{Y/X}.$$
(If $V$ is a flat family over $D := \Spec k[\epsilon]/\epsilon ^2$, and if $f:V \to X \times D$ is such that 
$f_0: V_0 \to X$ is a closed immersion, then so is $f$. Thus a morphism
$D \to \Hom_B(U, X \times B)$ gives a natural morphism $D \to \rm Hilb(X)$.)
As a first step toward bounding the degree of $M'$ we will show
 that the $\O_Y$-submodule of $N_Y$ generated by the image of $\phi$ 
has degree at most $(n+m) \deg Y - \dim \T_Y.$

Note that the fiber of the map $\Hom_B(U, X \times B) \to B$ over $b$ is 
$\Hom(Y, X)$. Therefore, the vertical Zariski tangent space to $\Hom_B(U, X \times B) $ at $[i]$ is isomorphic to 
$H^0(\T_X|_Y)$. Since $Y$ is zero-dimensional, we may identify $H^0(\T_X|_Y)$ with $\T_X|_Y$.

Let $(Q,\rm m_Q)$ be the local ring of $\Hom_B(U, X \times B)$ at $[i]$, and let $\rm m_b$ be the 
maximal ideal of the local ring of $B$ at $b$. There is an exact sequence of $k$-vector spaces: 
$$
(\rm m_bQ+\rm m_Q^2)/m_Q^2 \to \rm m_Q/\rm m_Q^2 \to \rm m_Q/(\rm m_bQ+\rm m_Q^2) \to 0.
$$
Since $(\rm m_Q/(\rm m_bQ+\rm m_Q^2))^*$ is the vertical Zariski tangent space at $[i]$, it is equal to 
$H^0(\T_X|_Y)$. So dualizing the above sequence, we get an exact sequence 
$$ 
0 \to H^0(\T_X|_Y) \to T_{\Hom_B(U, X \times B) ,[i]} \to V.
$$
Since $B$ is smooth of dimension $m$ we see that
 $V = Hom((\rm m_bQ+\rm m_Q^2)/m_Q^2, Q/{\rm m}_b)$ is a vector space of dimension is $\leq m$.  

Denote by $N$ the quotient of $\T_X|_Y$ by $\T_Y$, and consider the diagram
$$
\xymatrix{
&\T_Y \ar@{^{(}->}[d]\\
0 \ar[r] &\T_X|_Y \ar[r] \ar@{>>}[d] & T_{\Hom_B(U, X \times B) ,[i]} \ar[r] \ar[d]^{\phi} & V\\
& N \ar@{^{(}->}[r] & N_{Y/X} \\
}
$$
The image of $\T_{X}|_Y$ under $\phi$ is contained in the image of $N \to N_{Y/X}$, which is 
a $\O_Y$-submodule of degree $\leq \deg \T_{X}|_Y - \deg \T_Y = n \deg Y - \deg \T_Y.$
The degree of the $\O_Y$-submodule generated by the image of $\phi$ in $N_{Y/X}$ is therefore 
$$
\leq n \deg Y - \deg \T_Y + \dim V  \cdot \deg Y = (n+\dim B) \deg Y - \dim \T_Y.
$$
This establishes the desired upper bound on the degree of the
$\O_Y$-submodule of $N_Y$ generated by the image of $\phi$.

Let $\pi_1$ and $\pi_2$ denote the projections maps from $I$ to $\Hom_B(U, B \times X)$ and 
$H$ respectively. We have a diagram 
$$
\xymatrix{ T_{I, ([i], [\Lambda])}  \ar@{>>}[r]^{d\pi_2} \ar[d]^{d\pi_1} & T_{H, [\Lambda]}  
\ar@{^{(}->}[r]& 
T_{\G_{n+c}, [\Lambda]}=H^0(N_{\Lambda/\P^r}) \ar[dd]^{\rho_Y}  \\
T_{\Hom_B(U, B \times X), [i]} \ar[d]^{\phi} & & \\
N_{Y/X} \ar[rr]^{\psi}  & &N_{\Lambda/\P^r}|_Y
}
$$
The diagram shows that $M'$ is contained in the submodule generated by the image of the 
composition of the maps $T_{\Hom_B(U, B \times X), [i]} \to N_{Y/X} \to N_{\Lambda/\P^r}|_Y$, so 
by the upper bound established before we have 
$$ 
\deg M' \leq (n+m) \deg Y - \dim \T_Y.
$$
On the other hand, the same argument as in the proof of Theorem \ref{one subscheme} shows that 
$$
\deg M' \geq (n+c) \deg Y - (n+c).
$$
Thus, we get   
$$
 (c - m) \deg Y + \deg \T_Y \leq n+c.
 $$
 Since $\deg \T_{U_b}$ is an upper semicontinuous function on $B$, we get the desired result.    
 \medskip

\end{proof}



\section{A Transversality Theorem}
\label{transversality section}

Mather defines a property of a smooth subvariety of a jet bundle that he calls \emph{modularity},
and proves that a general projection has jets that are transverse to any modular subvariety.
We will make a related, but different definition of a more global sort, and prove a 
transversality result that can be used to give a different derivation of some of Mather's results.

Let $H$ be a locally closed subscheme of $G :=\G_{n+c}$. Let $[\Lambda] \in H$ be such that  
$ Z := \Lambda \cap X$ is zero-dimensional. Consider the restriction map 
$$ 
\rho: T_{G, [\Lambda]} = H^0(N_{\Lambda/\P^r}) \to N_{\Lambda/\P^r}|_Z,
$$ and set $V_G = \rho(T_{G,[\Lambda]})$ and $V_H= \rho(T_{H, [\Lambda]}) \subset V_G$. Denote by 
$\O_ZV_H$ the $\O_Z$-submodule of $N_{\Lambda/\P^r}|_Z$  generated by $V_H$. 

We call $H$ {\em \modular with respect to $X$} if whenever $Z = \Lambda \cap X$ is zero-dimensional, we have 
$$
V_G \cap \O_ZV_{H} = V_{H}.
$$

\begin{example}\label{modular-example}
We describe examples of \modular  and non-\modular  subschemes of $G =\G_{n+c}$. 
First, let $l\geq 0$ be an integer.  Let $U \subset \G_{n+c} \times \P^r$ be the 
universal family over $\G_{n+c}$, and let 
$$
U_X = U \cap (\G_{n+c} \times X) \subset \G_{n+c} \times \P^r 
$$ 
be the scheme theoretic intersection.  By \cite[I, Theorem 1.6]{kollar}, there is a locally closed subscheme $H$ 
of $\G_{n+c}$ with the following property: For any morphism $S \to \G_{n+c}$, 
the pullback of $U_X$ to $S$ is flat of relative dimension zero and relative degree $l$ if and only if 
$S \to \G_{n+c}$ factors through $H$. The closed points of $H$ parametrize those linear subvarieties whose degree of intersection with $X$ is $l$. 

 Denote the normal sheaf of $Z$ in $X$ by $N_{Z/X} = \Hom (I_{Z/X}, O_Z)$. 
 If $ I$ and $J$ are the ideal sheaves of $X$ and $\Lambda$ in $\P^r$ 
 respectively, 
 then there is a surjective map of $\O_Z$-modules
 $$
\frac{J}{J^2+ I J} 
\to \frac{J+ I}{J^2+I}.
$$
 Dualizing the above map into $\O_Z$, we get an injective map 
 $$ 
 N_{Z/X} \to N_{\Lambda/\P^r}|_Z.
 $$  If $M$ is the image of this map, then $M$ is an $\O_Z$-module, and  $V_G \cap M= V_H$,
 as one sees by considering morphisms from $\Spec k[t]/(t^2)$. Therefore $H$ is a \modular with respect to $X$.
\smallskip
  
  For the next example, fix a point $p \in X$, and let $q \not\in X $ be a point on a tangent line to $X$ at $p$. Let $H$ be the subvariety of $G$ that consists of those linear subvarieties of codimension 
  $n+c$ that pass through $q$. We claim that $H$ is not a \modular subvariety. Pick $[\Lambda] \in H$   
  so that it passes through $p$ and $q$, and set $Z= \Lambda \cap X$. 
  Choose a system of homogenous coordinates $x_0, \dots, x_{r-n-c}$ for $\Lambda$ such that 
  $p =(1: 0:
   \dots :0)$, $q = (0:1:0:\dots :0)$, and $Z \subset U:=\{x\in \Lambda \mid x_0 \neq 0 \}$. We have 
   $\O_U(U) = k[x_1, \dots, x_{r-n-c}]$, and since $q$ is on the tangent plane to $X$, for any 
   linear polynomial that vanishes on $Z$, the coefficient of $x_1$ is zero.  
   
   If we identify $T_{G, [\Lambda]}$ with the global sections of $N_{\Lambda/\P^r} \simeq \O_{\Lambda}(1)^{n+c}$, then  $T_{H, [\Lambda]}$ is identified with the $(n+c)$-tuples of linear forms $(L_1, \dots, L_{n+c})$ in $x_0, \dots, x_{r-n-c}$ vanishing at $q$. The image of 
  $$
  \rho: H^0(N_{\Lambda/\P^r}) \to N_{\Lambda/\P^r}|_Z \simeq \O_Z^{n+c}
  $$
  contains $(1,0,\dots,0)$ and $(x_1|_Z,0,\dots,0)$. 
  But $(1,0,\dots,0) \in 
  \rho(T_{H, [\Lambda]})$ and $(x_1|_Z,0,\dots,0) \not\in \rho(T_{H, [\Lambda]})$. Thus $H$ is not 
  semi-modular.   
\end{example}
  
We now turn to the transversality result.
If $f: Y_1 \to Y_2$ is a regular morphism between smooth varieties, and if $H$ is a smooth subvariety of 
$Y_2$, then $f$ is called  {\em transverse} to $H$ if for every $y$ in $Y_1$, either 
$f(y) \not\in H$ or  $$ 
 T_{H, f(y)} + df(T_{Y_1, y} ) = T_{Y_2, f(y)}
 $$
where $df: T_{Y_1, y} \to 
T_{Y_2, f(y)}$ is the map induced by $f$ on the Zariski tangent spaces.

\begin{theorem}\label{trans}
Let $X \subset \P^{r}$ be a smooth projective variety of dimension $n$, fix $c \geq 1$, and let $H$ be 
a smooth subvariety of $\G_{n+c}$ that is \modular with respect to $X$. 
For a general linear projection $\pi_{\Sigma}:X \to \P^{n+c}$, the map 
$$
\phi_{\Sigma} : \P^{n+c} \to \G_{n+c}
$$
that sends $y \in \P^{n+c}$ to the corresponding linear subvariety in $\P^r$ is transverse to $H$.
\end{theorem}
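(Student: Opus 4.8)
Here is how I would approach Theorem \ref{trans}.

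The plan is to rewrite the transversality condition at a point as a tangent-space surjectivity, and then to obtain that surjectivity by exactly the generic-smoothness argument that already appears inside the proof of Lemma \ref{f}. First I would set up the incidence correspondence
$$
J = \{([\Sigma],[\Lambda]) \in \G_{n+c+1} \times H : \Sigma \subseteq \Lambda\},
$$
with projections $\pi_1 \colon J \to \G_{n+c+1}$ and $\pi_2 \colon J \to H$. The map $\pi_2$ is a projective bundle (its fibre over $[\Lambda]$ is the space of hyperplanes of $\Lambda$), so since $H$ is smooth, $J$ is smooth. If $\pi_1$ is not dominant, then for a general $[\Sigma]$ there is no $[\Lambda] \in H$ with $\Sigma \subseteq \Lambda$; since $\phi_{\Sigma}(\P^{n+c})$ is precisely the set of $[\Lambda] \in \G_{n+c}$ containing $\Sigma$, this forces $\phi_{\Sigma}(\P^{n+c}) \cap H = \emptyset$ and $\phi_{\Sigma}$ is transverse to $H$ vacuously. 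So I may assume $\pi_1$ is dominant, which is exactly the hypothesis of Lemma \ref{f}.

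Next I would reinterpret transversality. Fix a general $[\Sigma]$ (so $\Sigma \cap X = \emptyset$) and let $[\Lambda] \in H$ contain $\Sigma$; set $y = \phi_{\Sigma}^{-1}([\Lambda])$. Since $\phi_{\Sigma}$ embeds $\P^{n+c}$ into $\G_{n+c}$ as the linear sub-Grassmannian of planes containing $\Sigma$, its differential identifies $T_{\P^{n+c},y}$ with the first-order deformations of $\Lambda$ that fix $\Sigma$, i.e. with $H^0(N_{\Lambda/\P^r} \otimes \cI_{\Sigma/\Lambda}) = H^0(N_{\Lambda/\P^r}(-1))$, sitting inside $T_{\G_{n+c},[\Lambda]} = H^0(N_{\Lambda/\P^r})$ via multiplication by a linear form cutting out $\Sigma$ in $\Lambda$. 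From $0 \to N_{\Lambda/\P^r}(-1) \to N_{\Lambda/\P^r} \to N_{\Lambda/\P^r}|_{\Sigma} \to 0$ and $H^1(N_{\Lambda/\P^r}(-1)) = H^1(\O_{\Lambda}^{\,n+c}) = 0$, restriction to $\Sigma$ gives an isomorphism $H^0(N_{\Lambda/\P^r})/d\phi_{\Sigma}(T_{\P^{n+c},y}) \xrightarrow{\ \sim\ } H^0(N_{\Lambda/\P^r}|_{\Sigma})$. Hence $\phi_{\Sigma}$ is transverse to $H$ at $[\Lambda]$ if and only if the restriction map $T_{H,[\Lambda]} \to H^0(N_{\Lambda/\P^r}|_{\Sigma})$ is surjective.

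To prove this surjectivity I would run the commutative-square chase from the proof of Lemma \ref{f}: since $J$ is smooth and $\pi_1$ is dominant, generic smoothness gives a dense open $W \subseteq \G_{n+c+1}$ over which $\pi_1$ is a smooth morphism, so for $[\Sigma] \in W$ the map $d\pi_1 \colon T_{J,([\Sigma],[\Lambda])} \to T_{\G_{n+c+1},[\Sigma]} = H^0(N_{\Sigma/\P^r})$ is surjective for \emph{every} $[\Lambda]$ over $[\Sigma]$. Using $0 \to N_{\Sigma/\Lambda} \to N_{\Sigma/\P^r} \to N_{\Lambda/\P^r}|_{\Sigma} \to 0$ with $H^1(N_{\Sigma/\Lambda}) = 0$, the composite $T_{J} \xrightarrow{d\pi_1} H^0(N_{\Sigma/\P^r}) \to H^0(N_{\Lambda/\P^r}|_{\Sigma})$ is surjective; by the definition of $J$ it agrees with $T_{J} \xrightarrow{d\pi_2} T_{H,[\Lambda]} \xrightarrow{\text{restr.}} H^0(N_{\Lambda/\P^r}|_{\Sigma})$, so the restriction map out of $T_{H,[\Lambda]}$ is surjective, which is what Step 2 needs. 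Combining with Step 2: for every $[\Sigma] \in W$ avoiding $X$ and every $[\Lambda] \in H$ containing $\Sigma$, $\phi_{\Sigma}$ is transverse to $H$ at $[\Lambda]$; since these exhaust $\phi_{\Sigma}(\P^{n+c}) \cap H$, the map $\phi_{\Sigma}$ is transverse to $H$, and a general $[\Sigma]$ lies in $W$ and avoids $X$. (Semi-modularity, the equality $V_G \cap \O_Z V_H = V_H$, is the hypothesis under which this transversality carries Mather-type consequences; if one prefers to deduce the surjectivity from Theorem \ref{invariant} and Lemma \ref{f} instead of repeating their proof, it is this equality that lets one replace the relevant $\O_Z$-submodule by the subspace it meets.)

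The step I expect to be the main obstacle is upgrading from ``a general pair $([\Sigma],[\Lambda]) \in J$'' to ``a general $[\Sigma]$ and \emph{all} $[\Lambda]$ over it'': transversality of $\phi_{\Sigma}$ requires the tangent condition at every point of $\phi_{\Sigma}(\P^{n+c}) \cap H$, and this is exactly where one must use that $\pi_1 \colon J \to \G_{n+c+1}$ is a dominant morphism of smooth varieties, so that generic smoothness yields smoothness of $\pi_1$ over a whole dense open of the base and hence surjectivity of $d\pi_1$ simultaneously along each good fibre. The other point requiring care is the geometric identification in Step 2 of $d\phi_{\Sigma}(T_{\P^{n+c},y})$ with $H^0(N_{\Lambda/\P^r}(-1))$, which is what makes the Lemma \ref{f} surjectivity literally the transversality condition.
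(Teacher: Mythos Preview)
Your argument is correct. The key reformulation in Step~2---that transversality of $\phi_{\Sigma}$ to $H$ at $[\Lambda]$ is exactly surjectivity of the restriction $T_{H,[\Lambda]} \to H^0(N_{\Lambda/\P^r}|_{\Sigma})$---together with the commutative-square chase from the proof of Lemma~\ref{f}, does the job, and you are right that generic smoothness of $\pi_1$ upgrades the conclusion from ``a general pair'' to ``a general $[\Sigma]$ and every $[\Lambda]$ over it,'' which is precisely what transversality demands.

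The paper's own proof takes a different route: it passes through the sheaf $F=\ker(N_{\Lambda/\P^r}\to Q)$ and invokes the \emph{conclusion} of Lemma~\ref{f} (surjectivity of $H^0(N_{\Lambda/\P^r}(-1))\to Q$) rather than reusing its internal argument. This is where the semi-modularity hypothesis enters: it is used to identify $T_{H,[\Lambda]}$ with $H^0(F)$ (via $V_G\cap\O_ZV_H=V_H$), so that $H^0(F)+H^0(N_{\Lambda/\P^r}(-1))=H^0(N_{\Lambda/\P^r})$ becomes the desired transversality. Your approach sidesteps $F$ and $Q$ entirely by working directly with the restriction to $\Sigma$, and as you note in your closing remark, it therefore never uses semi-modularity. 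In other words, you have actually proved the transversality statement for \emph{any} smooth locally closed $H\subset\G_{n+c}$, not just semi-modular ones. What the paper's route buys is a tighter link to the paper's central machinery (Theorem~\ref{invariant} and the module $\O_ZV_H$); what yours buys is a cleaner and strictly more general statement.
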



%

\begin{proof}
Let $[\Sigma]$ be a general point of $\G_{n+c+1}$. For $y \in \P^{n+c}$, let
$\Lambda \subset \P^r$ be the corresponding linear subvariety,
the preimage of $y$ under the projection map from $\PP^{n+c}$,
and set $Z =\Lambda \cap X$. 

Assume $[\Lambda] \in H$, and let $V_H$ be the image of  $T_{H, [\Lambda]}$ under the restriction map 
$$ 
T_{G, [\Lambda]} = H^0(N_{\Lambda/\P^r}) \to N_{\Lambda/\P^r}|_Z.
$$ 
Denote by $Q$ the quotient of  $N_{\Lambda/\P^r}|_Z$ by $\O_ZV_H$:
$$ 
0 \to \O_ZV_H \to N_{\Lambda/\P^r}|_Z \to Q \to 0.
$$
Then we can consider $Q$ as a sheaf of $\O_{\Lambda}$-modules that is supported on $Z$. 
Let $F = \ker (N_{\Lambda/\PP^r} \to Q)$, so that $\rho(T_{H,\Lambda}) \subset H^0(F).$
Since $H$ is \modular, $\O_ZV_H \cap V_G= V_H$, and hence 
$T_{H, [\Lambda]} = H^0(F)$. 

To prove the statement, note that if for a general $\Sigma$, there is 
no $y \in \P^{n+c}$ with $\phi([\Sigma], y) \in H$, then there is nothing to prove. 
Otherwise, by Lemma \ref{f}, the map 
$H^0(N_{\Lambda/\P^r} \otimes \O_{\Lambda}(-1)) \to Q$ is surjective. Hence 
If we consider $H^0(F)$ and $H^0(N_{\Lambda/\P^r} \otimes \O_{\Lambda}(-1))$ as 
subspaces of $H^0(N_{\Lambda/\P^r})$, then we get 
$$
H^0(F) + H^0(N_{\Lambda/\P^r} \otimes \O_{\Lambda}(-1)) = 
H^0(N_{\Lambda/\P^r}).
$$
If we identify $T_{\G_{n+c}, [\Lambda]}$ with the 
space of global sections of $N_{\Lambda/\P^r}$, then 
$d\phi_{\Sigma}(T_{\P^{n+c}, y})$ is identified with $H^0(N_{\Lambda/\P^r} \otimes 
\O_{\Lambda}(-1))$, and thus
$$
T_{H, \phi_{\Sigma}(y)} + d\phi_{\Sigma}(T_{\P^{n+c}, y} ) = H^0(N_{\Lambda/\P^r}) =T_{\G_{n+c}, 
\phi_{\Sigma}(y)}.
$$

\end{proof}

\bigskip
\newcommand{\closer}{\vspace{-1.5ex}}

\bigskip
\vbox{\noindent Author Addresses:\par
\smallskip
\noindent{Roya Beheshti}\par
\noindent{Department of Mathematics, Washington University, St. Louis,
MO 63130}\par
\noindent{beheshti@math.wustl.edu}\par
\smallskip
\noindent{David Eisenbud}\par
\noindent{Department of Mathematics, University of California, Berkeley,
Berkeley CA 94720}\par
\noindent{eisenbud@math.berkeley.edu}\par
}

 \end{document}